\providecommand\@dotsep{5}
\def\listtodoname{List of Todos}
\def\listoftodos{\@starttoc{tdo}\listtodoname}
\numberwithin{equation}{section}
\newtheorem{theorem}{Theorem}[section]
\newtheorem{proposition}[theorem]{Proposition}
\newtheorem{lemma}[theorem]{Lemma}
\newtheorem{example}[theorem]{Example}
\newtheorem{corollary}[theorem]{Corollary}
\theoremstyle{definition}
\newcommand{\N}{{\mathbb N}}
\newcommand{\R}{{\mathbb R}}
\newcommand{\eps}{\varepsilon}
\newcommand{\pnorm}[2][]{\if #1'' \left|#2\right|_p \else \left|#2\right|_{#1} \fi}
\title{$\nicefrac{1}{2}$-Laplacian problems with exponential nonlinearity}
\author[A.\ Iannizzotto]{Antonio Iannizzotto}
\author[M.\ Squassina]{Marco Squassina}
\address{Dipartimento di Informatica \newline\indent
Universit\`a degli Studi di Verona
\newline\indent
C\'a Vignal 2, Strada Le Grazie 15, I-37134 Verona, Italy}
\email{marco.squassina@univr.it}
\email{antonio.iannizzotto@univr.it}
\thanks{The second-named author was supported by
2009 national MIUR project: {\em ``Variational and Topological Methods in the Study of Nonlinear Phenomena''}}
\subjclass[2010]{34K37, 34B10, 46E30}
\keywords{Fractional laplacian, Trudinger-Moser inequality, nonlocal equations}
\begin{document}

\begin{abstract}
By exploiting a suitable Trudinger-Moser inequality for fractional Sobolev spaces, 
we obtain existence and multiplicity of solutions for a class of one-dimensional nonlocal
equations with fractional diffusion and nonlinearity at exponential growth.
\end{abstract}

\maketitle
%\listoftodos

\section{Introduction and results}

\noindent
Since the seminal results by Trudinger \cite{T} and Moser \cite{M} on embeddings
of exponential type for the Sobolev spaces $H^1_0(\Omega)$ with $\Omega\subset\R^2$, many contributions have appeared
related to applications of these results to semi-linear elliptic partial differential equations such as
\begin{equation}
\label{prob-loc}
\left\{  \begin{array}{ll}
    -\Delta u=f(u) &\mbox{in $\Omega$} \\
    u=0 &\mbox{on $\partial\Omega$,}
        \end{array}
      \right.
\qquad 
f(t)\sim e^{4\pi t^{q}}\quad\text{as $t\to+\infty$},
\qquad 
0<q\leq 2,
\end{equation}
where the case $q<2$ is considered a subcritical growth, while the case $q=2$
is known as the critical case with respect to the Trudinger-Moser inequality (see \cite[Theorem 1]{M})
\begin{equation}
\label{trudmos}
\sup_{\substack{u\in H^1_0(\Omega) \\ \|\nabla u\|_{L^2(\Omega)}\leq 1}}\int_\Omega e^{4\pi u^2}\leq C|\Omega|.
\end{equation}
For existence and multiplicity of solutions for problems like \eqref{prob-loc} via techniques of critical point theory, we refer the 
reader to De Figueiredo, Miyagaki $\&$ Ruf \cite{DMR} and to the references therein. Many extensions of inequality \eqref{trudmos} have been achieved,
for spaces $W^{1,n}_0(\Omega)$ with $\Omega\subset\R^n$ 
and also to higher order Sobolev spaces (see Adams \cite{adams}). 
As a consequence, quasi-linear problems involving the $n$-Laplacian on domains $\Omega\subset\R^n$ or the 
linear biharmonic operator $\Delta^2$ for functions of $W^{2,2}_0(\Omega)$
on domains $\Omega\subset\R^4$ can be studied. Focusing the attention on nonlinear problems at exponential growth
involving linear diffusion, if the dimension four is natural for the biharmonic operator $\Delta^2$ and
dimension two is natural for the laplacian $-\Delta$, the natural setting for the fractional
diffusion $(-\Delta)^{1/2}$ is dimension one.
\vskip2pt
\noindent
Fractional Sobolev spaces are well known since the beginning of the last century, especially in the framework of harmonic analysis. More recently, after the paper of Caffarelli $\&$ Silvestre \cite{caffarelli},
a large amount of papers were written on problems
involving the fractional diffusion $(-\Delta)^s$, $0<s<1$. Due to its nonlocal nature, working on bounded
domains suggests the functions to be defined on the whole $\R^n$ and that the problems
are formulated as follows (see Servadei $\&$ Valdinoci \cite{SV}):
\begin{equation}
\label{sfrac}
\left\{  \begin{array}{ll}
    (-\Delta)^{s} u=f(u) &\mbox{in $\Omega$} \\
    u=0 &\mbox{in $\R^n\setminus \Omega$.}
        \end{array}
      \right.
\end{equation}
For the functional framework of fractional Sobolev spaces and fractional
Laplacian, we refer the reader to the survey of Di Nezza, Palatucci $\&$ Valdinoci \cite{DPV}.
Equations like \eqref{sfrac} appear
in fractional quantum mechanics in the study of
particles on stochastic fields modeled by L\'evy processes
which occur widely in physics and biology
and recently the stable L\'evy processes
have attracted much interest.
One dimensional cases have been studied 
by Weinstein \cite{weinstein}. 
\vskip2pt
\noindent
In the present paper we will prove the existence and multiplicity of 
solutions for a Dirichlet problem driven by the  $\nicefrac{1}{2}$-Laplacian operator of the following type:
\begin{equation}
\label{problema-1d}
\tag{$P$} \ \ \ \ \ \ \ \ \ 
\left\{  \begin{array}{ll}
    (-\Delta)^{\nicefrac{1}{2}} u=f(u) &\mbox{in $(0,1)$} \\
    u=0 &\mbox{in $\R\setminus (0,1)$,}
        \end{array}
      \right.
\end{equation}
equivalently written in $(0,1)$ as the nonlocal equation
$$
\frac{1}{2\pi}\int_{\R}\frac{u(x+y)+u(x-y)-2 u(x)}{|y|^{2}}dy+f(u)=0.
$$
It is natural to work on the space
\begin{equation}
\label{Xdef}
X=\left\{u\in H^{\nicefrac{1}{2}}(\R): \ u=0 \ \mbox{in $\R\setminus(0,1)$} \right\}, \quad \|u\|_X=[u]_{H^{\nicefrac{1}{2}}(\R)}
\end{equation}
where $[\cdot]_{H^{\nicefrac{1}{2}}(\R)}$ denotes the Gagliardo semi-norm (see Proposition \ref{hilb}). For this space, 
we state (see Corollary \ref{mti}) and exploit the following Trudinger-Moser type inequality: there exists $0<\omega\leq\pi$ such that for all $0<\alpha<2\pi\omega$ we can find $K_\alpha>0$ such that
\begin{equation}
\label{ourTM}
\int_0^1 e^{\alpha u^2}dx\leq K_\alpha,\qquad
\text{for all $u\in X$, $\|u\|_X\leq 1$.}
\end{equation}
\vskip2pt
\noindent
We list below our hypotheses on the nonlinearity $f$ in the \textit{subcritical} case:
\begin{itemize}
\item[{\bf H}] Let $f\in C(\R)$ be a function such that $f(0)=0$ and denote
\[F(t)=\int_0^t f(\tau)d\tau, \quad \mbox{for all $t\in\R$.}\]
Moreover, assume that there exist $t_0,M>0$ such that:
\smallskip
\begin{itemize}
\item[$(i)$] $0<F(t)\leq M|f(t)|,$  \,\,\quad for all $|t|\geq t_0$;
\vskip4pt
\item[$(ii)$] $0<2F(t)\leq f(t)t$, \qquad for all $t\neq 0$;
\vskip4pt
\item[$(iii)$] $\displaystyle\limsup_{t\to 0}\frac{F(t)}{t^2}<\frac{\lambda_1}{4\pi},$ \quad ($\lambda_1$ provided by Proposition \ref{poinc} below);
\vskip4pt
\item[$(iv)$] $\displaystyle\lim_{|t|\to\infty}\frac{|f(t)|}{e^{\alpha t^2}}=0$, \qquad for all $\alpha>0$.
\end{itemize}
\end{itemize}
\vskip2pt
By a (weak) solution of problem \eqref{problema-1d} we mean a function $u\in X$ satisfying \eqref{weaks} (see Section 3). The following are our main results:

\begin{theorem}\label{subcrit}
If ${\bf H}$ hold, then \eqref{problema-1d} has a nontrivial solution $u\in H^{\nicefrac{1}{2}}(\R)$.
If in addition $f$ is odd, then \eqref{problema-1d} has infinitely many solutions in $H^{\nicefrac{1}{2}}(\R)$.
\end{theorem}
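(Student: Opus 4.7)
The plan is to find critical points of the $C^1$ functional
$$J(u) := \tfrac{1}{2}\|u\|_X^2 - \int_0^1 F(u)\,dx, \qquad u \in X,$$
whose critical points coincide with the weak solutions of \eqref{problema-1d}. The subcritical growth (iv) together with the Trudinger-Moser inequality \eqref{ourTM} guarantees that $F(u)$ and $f(u)u$ are in $L^1(0,1)$ for every $u \in X$ and yields the $C^1$ regularity of $J$ by standard differentiation under the integral.

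I will apply the Mountain Pass Theorem to produce a nontrivial critical point. For the local bound, combining (iii) and (iv) gives, for every $\alpha>0$, $r>2$, and some small $\delta>0$, a constant $C_\alpha>0$ such that $F(t) \leq (\lambda_1/(4\pi)-\delta)t^2 + C_\alpha |t|^r e^{\alpha t^2}$ for all $t\in\R$. Estimating the quadratic term via Poincaré (Proposition~\ref{poinc}) and the remainder by Hölder and \eqref{ourTM} -- choosing $\rho$ small so that $\alpha\rho^2<2\pi\omega$, which is what makes Trudinger-Moser applicable on $\|u\|_X\leq\rho$ -- I obtain $J(u)\geq \sigma \|u\|_X^2 - C\|u\|_X^r$, hence a strictly positive lower bound on a small sphere. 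For the second mountain pass condition, integrating (i) yields the exponential lower bound $F(t)\geq c_1 e^{|t|/M}$ for $|t|$ large; so for any fixed $v_0\in X$ with $v_0\geq 0$, $v_0\not\equiv 0$, the exponential growth of $\int_0^1 F(tv_0)\,dx$ overwhelms $\tfrac{t^2}{2}\|v_0\|_X^2$ and $J(tv_0)\to-\infty$ as $t\to+\infty$.

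The main obstacle is the Palais-Smale condition in the presence of exponential nonlinearity. Given $(u_n)\subset X$ with $J(u_n)\to c$ and $J'(u_n)\to 0$, the Ambrosetti-Rabinowitz condition (ii) furnishes
$$c+\eps_n\|u_n\|_X \geq J(u_n)-\tfrac{1}{2}\langle J'(u_n),u_n\rangle = \int_0^1\Bigl(\tfrac{1}{2}f(u_n)u_n - F(u_n)\Bigr)\,dx \geq 0,$$
which, combined with (i)--(ii), forces $(u_n)$ to be bounded in $X$. By the compact fractional Sobolev embedding into $L^p(0,1)$ for every $p<\infty$, up to a subsequence $u_n\rightharpoonup u$ in $X$, $u_n\to u$ in $L^p$, and $u_n\to u$ almost everywhere. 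Setting $R:=\sup_n\|u_n\|_X$, I fix $\alpha>0$ so small that $\alpha R^2<2\pi\omega$; (iv) then gives, for every $\eps>0$, a constant $C_\eps$ with $|f(t)|\leq \eps e^{\alpha t^2}+C_\eps$. Applying \eqref{ourTM} to $u_n/\|u_n\|_X$ bounds $e^{\alpha u_n^2}$ in some $L^q(0,1)$ with $q>1$, so both $f(u_n)$ and $f(u_n)u_n$ are equi-integrable; Vitali's theorem yields $L^1$ convergence, after which the Hilbert-space identity $\|u_n\|_X^2 = \langle J'(u_n),u_n\rangle + \int_0^1 f(u_n)u_n\,dx$ gives $\|u_n\|_X\to\|u\|_X$ and hence the strong convergence $u_n\to u$ in $X$. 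Thus PS holds at every level.

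The Mountain Pass Theorem then produces a nontrivial critical point at a positive minimax level, proving the first part. When $f$ is odd, $J$ is even; on any finite-dimensional subspace $V\subset X$, equivalence of norms together with the exponential lower bound on $F$ derived from (i) implies $J(u)\to-\infty$ as $\|u\|_X\to\infty$ in $V$. Combined with the geometry at the origin and the PS condition already established, the Symmetric Mountain Pass Theorem of Ambrosetti-Rabinowitz produces an unbounded sequence of critical values, and hence infinitely many distinct nontrivial solutions of \eqref{problema-1d}.
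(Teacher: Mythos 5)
Your proof is correct and follows essentially the same route as the paper's (mountain-pass geometry from ${\bf H}(iii)$--$(iv)$ plus the Trudinger--Moser inequality, boundedness of Palais--Smale sequences from ${\bf H}(i)$--$(ii)$, passage to the limit via the compact embedding and exponential integrability, then the Mountain Pass Theorem and its symmetric version for the odd case), the only cosmetic difference being that you justify $\int_0^1 f(u_n)u_n\,dx\to\int_0^1 f(u)u\,dx$ by Vitali's theorem where the paper uses weak $L^2$-convergence of $f(u_n)$. The one detail to adjust is the normalization: the paper's functional carries $\|u\|_X^2/(4\pi)$ rather than $\tfrac{1}{2}\|u\|_X^2$, which is what makes critical points coincide with the weak formulation \eqref{weaks} (via \eqref{equinorm}) and what calibrates the constant $\lambda_1/(4\pi)$ in ${\bf H}(iii)$; with your normalization the mountain-pass geometry survives, but the equation solved acquires a spurious factor $(2\pi)^{-1}$ in front of $f(u)$.
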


\noindent
On symmetric domains, we also have the following result:

\begin{theorem}\label{subcrit-sym}
If ${\bf H}$ hold, then the problem
\begin{equation*} 
\left\{  \begin{array}{ll}
    (-\Delta)^{\nicefrac{1}{2}} u=f(u) &\mbox{in $(-1,1)$} \\
    u=0 &\mbox{in $\R\setminus (-1,1)$,}
        \end{array}
      \right.
\end{equation*}
has an even nontrivial solution $u\in H^{\nicefrac{1}{2}}(\R)$
decreasing on $\R^+$.
\end{theorem}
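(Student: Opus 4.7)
The plan is to adapt the variational argument behind Theorem \ref{subcrit} to the symmetric interval $(-1,1)$ and then invoke a rearrangement inequality to obtain monotonicity on $\R^+$. Set $\tilde X = \{u \in H^{1/2}(\R) : u \equiv 0 \text{ on } \R \setminus (-1,1)\}$, equipped with the Gagliardo seminorm; the analogue of the Trudinger-Moser inequality \eqref{ourTM} holds on $\tilde X$ (the length of $(-1,1)$ affects only the constant $K_\alpha$). Define the energy functional $J(u) = \tfrac{1}{2}\|u\|_{\tilde X}^2 - \int_{-1}^{1} F(u)\,dx$, which is $C^1$ on $\tilde X$ with critical points coinciding with weak solutions of the problem.

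Consider the isometric $\Z_2$-action $\sigma u := u(-\cdot)$ on $\tilde X$ and its fixed-point set $X_e = \{u \in \tilde X : u = \sigma u\}$, a closed subspace of even functions. Since $J\circ\sigma = J$, Palais' principle of symmetric criticality guarantees that critical points of $J|_{X_e}$ are critical points of $J$ on $\tilde X$. The proof of Theorem \ref{subcrit} transfers to $J|_{X_e}$ with no structural change: hypotheses $(iii)$, $(i)$ and $(ii)$ yield the mountain pass geometry inside $X_e$, while $(iv)$ combined with the Trudinger-Moser inequality provides the Palais-Smale condition at the symmetric mountain pass level, once this level is checked to lie below the critical Trudinger-Moser threshold via an even Moser-type test sequence. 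One thereby obtains a nontrivial even solution $u \in X_e$ at the ground state level $c$.

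To upgrade $u$ to a monotone function on $\R^+$, let $u^*$ denote its symmetric decreasing rearrangement. Since $(-1,1)$ is centered at the origin, $u^* \in X_e$ and $u^*$ is non-increasing on $[0,1)$. The fractional Polya-Szegő inequality gives $\|u^*\|_{\tilde X} \leq \|u\|_{\tilde X}$, while equimeasurability preserves $\int F(u^*) = \int F(u)$ and $\int f(u^*) u^* = \int f(u) u$. Using hypothesis $(ii)$, the scalar map $t \mapsto J(tu^*)$ admits a unique positive maximum at some $t_* > 0$ with $t_* u^* \in \mathcal{N}$ (the Nehari manifold), and
\[
J(t_*u^*) \leq \max_{t>0}J(tu^*) \leq \max_{t>0}J(tu) = J(u) = c.
\]
Since $c$ is the infimum of $J$ on $\mathcal{N}$, equality holds and $t_* u^*$ is a ground state, thus solving the equation; by construction it is even and decreasing on $\R^+$. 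The main obstacle is checking the fractional Polya-Szegő inequality in the Dirichlet framework of $\tilde X$ and showing that rearrangement preserves the support constraint, both of which rely crucially on the symmetry of $(-1,1)$ about the origin.
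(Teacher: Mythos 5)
Your first two steps (the symmetric space $X_e$, symmetric criticality, and the transfer of Lemmas \ref{ps}, \ref{mount1}, \ref{asympt}) are fine, but the monotonicity upgrade contains a genuine gap. The chain
\[
J(t_*u^*)\leq\max_{t>0}J(tu^*)\leq\max_{t>0}J(tu)=J(u)=c,
\qquad c=\inf_{\mathcal N}J,
\]
relies on three facts that do not follow from ${\bf H}$: (1) that the fibering map $t\mapsto J(tu)$ has a \emph{unique} positive critical point, so that $\max_{t>0}J(tu)=J(u)$ for the mountain pass solution $u$; (2) that the mountain pass level coincides with the infimum of $J$ over the Nehari manifold; and (3) that a minimizer on $\mathcal N$ is a free critical point. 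All three require structural assumptions beyond the Ambrosetti--Rabinowitz condition ${\bf H}(ii)$ ($0<2F(t)\leq f(t)t$) --- typically strict monotonicity of $t\mapsto f(t)/t$ and enough regularity of $f$ to make $\mathcal N$ a $C^1$ natural constraint; the paper only assumes $f\in C(\R)$ with ${\bf H}(i)$--$(iv)$, and your claim that ${\bf H}(ii)$ alone gives uniqueness of the maximum of $t\mapsto J(tu^*)$ is false (it only gives $J(tu)\to-\infty$ and a sign on $\frac{d}{dt}J(tu)$ at critical points, not uniqueness). In addition, the symmetric decreasing rearrangement $u^*$ and the identity $\int F(u^*)=\int F(u)$ presuppose $u\geq 0$, which you never establish; under ${\bf H}$ the mountain pass solution need not have a sign.

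The paper avoids the Nehari detour entirely: it records the polarization inequality for the Gagliardo seminorm (Baernstein), namely $\|u^H\|_X\leq\|u\|_X$ for nonnegative $u$ and half-lines $H=(a,\infty)$, $a<0$, hence $\varphi(u^H)\leq\varphi(u)$, and then applies Van Schaftingen's symmetrization-compatible Mountain Pass Theorem (with $V=L^2(-1,1)$) together with Lemmas \ref{ps}, \ref{mount1} and \ref{asympt}. That theorem produces a critical point at the mountain pass level that is \emph{already} invariant under symmetrization, i.e.\ even and decreasing on $\R^+$, so no a posteriori rearrangement of the solution (and no ground-state identification) is needed. If you want to salvage your route, you would have to either add hypotheses ensuring the Nehari manifold is a natural constraint with unique fibering maxima, or switch to the polarization/minimax machinery the paper uses.
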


\noindent
Now we turn to the \textit{critical} case, under the following assumptions:

\begin{itemize}
\item[${\bf H'}$] Assume ${\bf H}(i)-(iii)$ and:
\begin{itemize}
\item[$(iv)$] there exists $0<\alpha_0<2\pi\omega$ such that
\[\lim_{|t|\to\infty}\frac{|f(t)|}{e^{\alpha t^2}}=\left\{
	\begin{array}{ll}
		\infty  & \mbox{if } 0<\alpha<\alpha_0 \\
		0 & \mbox{if } \alpha>\alpha_0
	\end{array};
\right.
\]
\item[$(v)$] there exists $\psi\in X$ such that $\|\psi\|_X=1$ and
\[\sup_{t\in\R^+}\Big(\frac{t^2}{4\pi}-\int_0^1 F(t \psi)dx\Big)<\frac{\omega}{2\alpha_0}.\]
\end{itemize}
\end{itemize}

\noindent
For this case we have the following result:

\begin{theorem}
\label{C-state}
If ${\bf H'}$ hold, then \eqref{problema-1d} has a nontrivial solution $u\in H^{\nicefrac{1}{2}}(\R)$.
\end{theorem}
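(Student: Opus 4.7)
The plan is to obtain a nontrivial critical point of the $C^1$ energy
$$J(u) = \frac{1}{4\pi}\|u\|_X^2 - \int_0^1 F(u)\,dx, \qquad u \in X,$$
via the Mountain Pass Theorem; the $1/(4\pi)$ coefficient matches the Dirichlet form of $(-\Delta)^{\nicefrac{1}{2}}$ expressed through the Gagliardo seminorm, so that critical points of $J$ are exactly the weak solutions of \eqref{problema-1d}. That $J \in C^1(X)$ follows from the growth bound in ${\bf H'}(iv)$ combined with the Trudinger--Moser inequality \eqref{ourTM}.

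Mountain Pass geometry comes from the two structural assumptions: ${\bf H'}(iii)$ together with Proposition \ref{poinc} gives $J(u) \geq \rho > 0$ on a small sphere $\|u\|_X = r$, while the Ambrosetti--Rabinowitz-type condition ${\bf H'}(ii)$, combined with ${\bf H'}(i)$, forces $F(t) \geq c|t|^\theta$ for some $\theta > 2$ and $|t|$ large, so $J(tv) \to -\infty$ for any fixed $v \in X\setminus\{0\}$. The MP theorem therefore produces a Palais--Smale sequence $(u_n) \subset X$ at a level $c > 0$, bounded in $X$ by a standard AR argument. The test function $\psi$ supplied by ${\bf H'}(v)$ provides an admissible MP path $t \mapsto t \psi$, giving the crucial level bound
$$0 < c \;\leq\; \sup_{t \geq 0} J(t\psi) \;<\; \frac{\omega}{2\alpha_0}.$$

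The core step is to show that a bounded PS sequence at a level $c \in (0, \omega/(2\alpha_0))$ admits a strongly convergent subsequence. Along a subsequence, $u_n \rightharpoonup u$ in $X$, with compact embedding into every $L^p(0,1)$ and a.e.\ convergence. I would first rule out $u \equiv 0$: in that case, pointwise convergence together with ${\bf H'}(iv)$ and \eqref{ourTM} (applied with exponent marginally above $\alpha_0$, admissible because the $\limsup$ of $\|u_n\|_X^2$ is strictly below $2\pi\omega/\alpha_0$ in this regime) produce $\int F(u_n) \to 0$ and $\int f(u_n) u_n \to 0$; combined with $\langle J'(u_n),u_n\rangle \to 0$ this would force $\|u_n\|_X \to 0$ and hence $J(u_n) \to 0$, contradicting $c > 0$. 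Once $u \neq 0$, passing to the limit in $\langle J'(u_n),\varphi\rangle \to 0$ yields $J'(u) = 0$, and the same uniform exponential integrability gives $\int f(u_n) u_n \to \int f(u) u = \|u\|_X^2/(2\pi)$; since $\|u_n\|_X^2 = 2\pi\int f(u_n)u_n + o(1)$, we conclude $\|u_n\|_X \to \|u\|_X$, i.e.\ strong convergence in the Hilbert space $X$, and $u$ is the required nontrivial weak solution.

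The main obstacle is precisely this compactness step, and more specifically, turning the strict level bound $c < \omega/(2\alpha_0)$ supplied by ${\bf H'}(v)$ into the strict norm bound $\limsup\|u_n\|_X^2 < 2\pi\omega/\alpha_0$ on the PS sequence. This is the regime in which \eqref{ourTM} can be invoked at an exponent slightly greater than $\alpha_0$ so that $\{e^{q\alpha_0 u_n^2}\}$ stays bounded in some $L^r$ with $r>1$, which in turn powers the uniform integrability of $\{f(u_n)\}$ and $\{f(u_n)u_n\}$ needed to pass to the limit in the Euler--Lagrange equation. Without hypothesis ${\bf H'}(v)$, concentration phenomena typical of the critical exponential regime could break compactness at the mountain-pass level; the role of (v) is exactly to rule them out.
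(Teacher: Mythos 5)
Your overall architecture (mountain pass at a level forced below $\omega/(2\alpha_0)$ by ${\bf H'}(v)$, plus a restricted Palais--Smale property) is the same as the paper's, and your treatment of the case $u\equiv 0$ is essentially the paper's case $(b)$ in Lemma \ref{ps-2}. The gap is in the case $u\neq 0$. You propose to convert the level bound $c<\omega/(2\alpha_0)$ into the norm bound $\limsup_n\|u_n\|_X^2<2\pi\omega/\alpha_0$ and then invoke the plain Trudinger--Moser inequality at an exponent slightly above $\alpha_0$. But that norm bound is false in general: along the PS sequence one only gets
\[
\|u_n\|_X^2\to 4\pi\Big(c+\int_0^1 F(u)\,dx\Big),
\]
and $\int_0^1 F(u)\,dx$ is not controlled by $c$, so the limit of $\|u_n\|_X^2$ can exceed $2\pi\omega/\alpha_0$ by an arbitrary amount. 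Consequently ``the same uniform exponential integrability'' is simply not available for $u\neq 0$, and the step $\int_0^1 f(u_n)u_n\,dx\to\int_0^1 f(u)u\,dx$ is unjustified as written.

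What the level bound actually controls is the \emph{defect}: since $u$ is a critical point with $\varphi(u)\geq 0$ (which itself requires first passing to the limit in the equation and using ${\bf H}(ii)$), one has $\|u_n-u\|_X^2\to 4\pi(c-\varphi(u))\leq 4\pi c<2\pi\omega/\alpha_0$. Exploiting this requires a Lions-type improvement of the Trudinger--Moser inequality along weakly convergent sequences with nonzero weak limit --- this is exactly the paper's Lemma \ref{lions}, which shows that for $v_n=u_n/\|u_n\|_X\rightharpoonup v$ with $0<\|v\|_X<1$ the functions $e^{\alpha v_n^2}$ stay bounded in $L^p(0,1)$ for $p<(1-\|v\|_X^2)^{-1}$, i.e.\ the admissible exponent improves by the factor $(1-\|v\|_X^2)^{-1}$. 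With that refinement (together with Proposition \ref{sum} to absorb the cross term involving the fixed function $u$) one gets $(f(u_n))$ bounded in some $L^q$, $q>1$, and the argument closes as in the paper's case $(c)$. Without this ingredient your compactness step does not go through; you should either prove such a lemma or restructure the case $u\neq 0$ around the quantity $\|u_n-u\|_X^2$ rather than $\|u_n\|_X^2$. A secondary, smaller omission: the convergence $\int_0^1 F(u_n)\,dx\to\int_0^1 F(u)\,dx$, which you use implicitly to identify the limit of $\|u_n\|_X^2$, needs the $L^1$-convergence $f(u_n)\to f(u)$ obtained by a truncation argument from the bound $\int_0^1 f(u_n)u_n\,dx\leq C$ (as in \cite{DMR}), not from exponential integrability.
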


\noindent
These results establish a one dimensional fractional counterpart (with the additional information
of symmetry and monotonicity of the solution in Theorem~\ref{subcrit-sym} for symmetric domains)
of the results of \cite{DMR} for the local case in dimension two. 
As far as the critical case is concerned, typically when 
$f(t)\sim e^{\alpha_0 t^{2}}$ as $t\to\infty$,
it is still unclear how to detect suitable (concentrating) optimizing sequences in $X$ for the fractional Trudinger-Moser 
inequality \eqref{ourTM}. However, we can prove that in this case the functional associated
to the problem satisfies the Palais-Smale condition at 
each level $c<\omega/(2\alpha_0)$ and that the
problem has a nontrivial solution 
under the additional hypothesis ${\bf H'}(v)$.
We point out that, with a similar machinery, 
existence and multiplicity of solutions for 
fractional non-autonomous problems like
\begin{equation*} 
\left\{  \begin{array}{ll}
    (-\Delta)^{\nicefrac{1}{2}} u=f(x,u) &\mbox{in $(a,b)$} \\
    u=0 &\mbox{in $\R\setminus (a,b)$,}
        \end{array}
      \right.
\end{equation*}
can be obtained under suitable assumptions on $f:(a,b)\times\R\to\R$.
\medskip

\section{Preliminaries}

\noindent
First we recall some basic facts about the $\nicefrac{1}{2}$-Laplacian operator and the related function 
space $H^{\nicefrac{1}{2}}(\R)$, following mainly \cite{DPV}. For all $s\in (0,1)$, all measurable $u$ and all $x\in\R$ we set
\[(-\Delta)^s u(x)=-\frac{C_s}{2}\int_{\R}\frac{u(x+y)+u(x-y)-2 u(x)}{|y|^{1+2s}}dy,\]
with the constant
\[C_s=\Big[\int_\R\frac{1-\cos(\xi)}{|\xi|^{1+2s}}d\xi\Big]^{-1}\]
(see \cite[Lemma 3.3]{DPV}). We focus on the case $s=\nicefrac{1}{2}$. Note that $C_{\nicefrac{1}{2}}=\pi^{-1}$. We define
\[H^{\nicefrac{1}{2}}(\R)=\left\{u\in L^2(\R): \ \int_{\R^2}\frac{(u(x)-u(y))^2}{|x-y|^2}dxdy<\infty\right\}\]
and for all $u\in H^{\nicefrac{1}{2}}(\R)$ we introduce the Gagliardo seminorm
\[[u]_{H^{\nicefrac{1}{2}}(\R)}=\left[\int_{\R^2}\frac{(u(x)-u(y))^2}{|x-y|^2}dxdy\right]^\frac{1}{2}\]
and the norm
\[\|u\|_{H^{\nicefrac{1}{2}}(\R)}=\left(\|u\|^2_{L^2(\R)}+[u]^2_{H^{\nicefrac{1}{2}}(\R)}\right)^\frac{1}{2}.\]
We know that $(H^{\nicefrac{1}{2}}(\R),\|\cdot\|_{H^{\nicefrac{1}{2}}(\R)})$ is a Hilbert space. Morever, by \cite[Proposition 3.6]{DPV} 
\begin{equation}\label{equinorm}
\|(-\Delta)^{\nicefrac{1}{4}}u\|_{L^2(\R)}=(2\pi)^{-\frac{1}{2}}[u]_{H^{\nicefrac{1}{2}}(\R)},\quad
\text{for all $u\in H^{\nicefrac{1}{2}}(\R)$}.
\end{equation}
Our main tool is a fractional Trudinger-Moser inequality (see Ozawa \cite[Theorem 1]{O} and Kozono, Sato $\&$ Wadade \cite[Theorem 1.1]{KSW}):

\begin{theorem}\label{oza}
There exists $0<\omega\leq\pi$ with the following property: for all $0<\alpha<\omega$ there exists $H_\alpha>0$ such that 
\[\int_\R\big(e^{\alpha u^2}-1\big)dx\leq H_\alpha\|u\|_{L^2(\R)}^2,\]
for every $u\in H^{\nicefrac{1}{2}}(\R)$ with $\|(-\Delta)^{\nicefrac{1}{4}}u\|_{L^2(\R)}\leq 1$.
\end{theorem}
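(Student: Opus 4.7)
The plan is to derive the inequality from a Gagliardo--Nirenberg interpolation with carefully tracked dependence on the exponent. Writing out the Taylor series,
\[
\int_\R (e^{\alpha u^2} - 1)\,dx = \sum_{k=1}^\infty \frac{\alpha^k}{k!} \|u\|_{L^{2k}(\R)}^{2k},
\]
the problem reduces to bounding each $L^{2k}$-norm in terms of $\|u\|_{L^2(\R)}$ and $\|(-\Delta)^{\nicefrac{1}{4}}u\|_{L^2(\R)}$ by a constant whose growth in $k$ can be tamed.

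The first and principal step is therefore to establish the interpolation estimate
\[
\|u\|_{L^{2k}(\R)}^{2k} \leq C\, A^k\, k^k\, \|u\|_{L^2(\R)}^2\, \|(-\Delta)^{\nicefrac{1}{4}}u\|_{L^2(\R)}^{2k-2}, \qquad k\geq 1,
\]
with constants $C,A>0$ independent of $k$. The natural route is through a Littlewood--Paley decomposition $u = \sum_j \Delta_j u$ on $\R$: Bernstein's inequality gives $\|\Delta_j u\|_{L^\infty(\R)} \lesssim 2^{j/2} \|\Delta_j u\|_{L^2(\R)}$, and combined with the equivalence $\|(-\Delta)^{\nicefrac{1}{4}}u\|_{L^2(\R)}^2 \sim \sum_j 2^j \|\Delta_j u\|_{L^2(\R)}^2$, the frequencies can be distributed so as to extract the $k^k$ factor. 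Equivalently, one can harmonically extend $u$ to the upper half-plane via Caffarelli--Silvestre and apply sharp two-dimensional trace--Sobolev inequalities, reducing to an Adams--Moser type bound for $\dot H^1(\R^2_+)$.

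Granting the interpolation, under the normalization $\|(-\Delta)^{\nicefrac{1}{4}}u\|_{L^2(\R)}\leq 1$ we sum term by term:
\[
\int_\R (e^{\alpha u^2}-1)\,dx \leq C\|u\|_{L^2(\R)}^2 \sum_{k=1}^\infty \frac{(\alpha A)^k k^k}{k!}.
\]
Stirling's bound $k^k/k! \leq e^k/\sqrt{2\pi k}$ renders the series convergent whenever $\alpha A e < 1$, so $\omega := \min\{\pi,\,1/(Ae)\}$ does the job, and $H_\alpha$ is the resulting prefactor.

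The core obstacle lies in the interpolation inequality itself: in the critical regime $sp=n$ (here $s=\nicefrac{1}{2}$, $n=1$, $p=2$) the embedding $H^{\nicefrac{1}{2}}(\R)\hookrightarrow L^\infty(\R)$ fails, and a plain Sobolev estimate would produce constants growing too rapidly in $k$ for the series to converge at any $\alpha>0$. The correct $k^k$ behavior is secured precisely by the frequency-localized cancellations of Littlewood--Paley theory (or by the harmonic extension). Pinning down the sharp value of $\omega$ is a more delicate question, but the qualitative statement -- existence of \emph{some} $\omega\in(0,\pi]$ -- follows from this scheme.
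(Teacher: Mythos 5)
The paper does not prove this statement at all: Theorem \ref{oza} is imported verbatim from the literature (Ozawa \cite[Theorem 1]{O} and Kozono--Sato--Wadade \cite[Theorem 1.1]{KSW}), so there is no in-paper proof to compare against. Your scheme is, in substance, exactly the proof in the cited source: Ozawa expands $e^{\alpha u^2}-1$ into its Taylor series and controls each term by a Gagliardo--Nirenberg inequality of the form $\|u\|_{L^q(\R)}\leq C\,q^{1/2}\,\|u\|_{L^2(\R)}^{2/q}\,\|(-\Delta)^{\nicefrac{1}{4}}u\|_{L^2(\R)}^{1-2/q}$, which is precisely your estimate $\|u\|_{L^{2k}}^{2k}\leq C A^k k^k\|u\|_{L^2}^2\|(-\Delta)^{\nicefrac{1}{4}}u\|_{L^2}^{2k-2}$ after raising to the power $2k$; he obtains it by splitting the Fourier transform into low and high frequencies and optimizing the cutoff, which is the sharp-cutoff version of the Littlewood--Paley/Bernstein argument you indicate. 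Your series bookkeeping is correct: under the normalization the $k$-th term is at most $C\|u\|_{L^2}^2(\alpha A)^k k^k/k!$, Stirling gives convergence for $\alpha<1/(Ae)$, and taking $\omega=\min\{\pi,1/(Ae)\}$ yields the claim as stated (only existence of some $\omega\in(0,\pi]$ is required, so the crude minimum with $\pi$ is legitimate).

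The one point you should not gloss over is that the interpolation inequality with the $k^k$ (equivalently $q^{1/2}$) constant \emph{is} the entire content of the theorem; everything after ``granting the interpolation'' is routine. As you correctly observe, a naive use of the subcritical Sobolev embedding gives constants blowing up too fast in $k$, so the frequency-localized argument is genuinely needed, and you only sketch it. If this were meant as a self-contained proof it would be incomplete at its central step; as a reconstruction of why the cited result holds, the outline is faithful and the routes you name (Bernstein plus optimization over the frequency cutoff, or the Caffarelli--Silvestre extension reducing to a half-plane Moser--Trudinger inequality) both do close the gap.
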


\noindent
We do not possess an explicit formula for the optimal constant $\omega$, and neither we know whether the inequality above holds for $\alpha=\omega$.
\vskip2pt
\noindent
Now we turn to the space $X$, defined in \eqref{Xdef}. Clearly the only constant function in $X$ is $0$, so the seminorm $[\cdot]_{H^{\nicefrac{1}{2}}(\R)}$ turns out to be a norm on $X$, which we denote by $\|\cdot\|_X$. We have the following Poincar\'e-type inequality:

\begin{proposition}\label{poinc}
There exists $\lambda_1>0$ such that for all $u\in X$
\[\|u\|_{L^2(0,1)}\leq\lambda_1^{-\frac{1}{2}}\|u\|_X.\]
Moreover, equality is realized by some $u\in X$ with $\|u\|_{L^2(0,1)}=1$.
\end{proposition}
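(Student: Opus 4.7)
The plan is to characterize $\lambda_1$ as the minimum of the Rayleigh quotient,
$$\lambda_1 = \inf_{u \in X \setminus \{0\}} \frac{\|u\|_X^2}{\|u\|_{L^2(0,1)}^2} = \inf\bgset{\|u\|_X^2 : u \in X,\ \|u\|_{L^2(0,1)} = 1},$$
so that both the Poincar\'e-type bound and the attainment claim reduce to showing this infimum is strictly positive and realized. I would then pick a minimizing sequence $(u_n) \subset X$ with $\|u_n\|_{L^2(0,1)} = 1$ and $\|u_n\|_X^2 \to \lambda_1$. Since $X$ is a Hilbert space by Proposition~\ref{hilb} and $(u_n)$ is bounded, I extract a weakly convergent subsequence $u_n \rightharpoonup u$ in $X$.

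Next, I would invoke the compact embedding $X \hookrightarrow L^2(0,1)$, which is the one-dimensional $s=\nicefrac{1}{2}$ instance of the fractional Rellich--Kondrachov theorem (\cite[Theorem~7.1]{DPV}), applied to $H^{\nicefrac{1}{2}}(\R)$-functions whose supports are contained in the fixed compact set $[0,1]$. This yields $u_n \to u$ strongly in $L^2(0,1)$, so $\|u\|_{L^2(0,1)} = 1$ and in particular $u \neq 0$. Weak lower semicontinuity of the Hilbert norm on $X$ then gives $\|u\|_X^2 \leq \liminf_n \|u_n\|_X^2 = \lambda_1$, while the reverse inequality is built into the definition of the infimum; hence $\|u\|_X^2 = \lambda_1$ and $u$ realizes the minimum. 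Finally, since $u$ vanishes on $\R \setminus (0,1)$ and is not identically zero, it cannot be a constant function on $\R$, so $\|u\|_X = [u]_{H^{\nicefrac{1}{2}}(\R)} > 0$, giving $\lambda_1 > 0$.

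The one delicate ingredient is the compactness of $X \hookrightarrow L^2(0,1)$: boundedness in the Gagliardo seminorm alone does not force strong $L^2$-convergence on all of $\R$, but the uniform support condition built into the definition of $X$ is exactly what allows the reduction to compactness on the bounded interval $(0,1)$. Once this is in hand, the rest is the standard Hilbert-space direct method, and no further hard analysis is needed to produce both the constant and its extremal simultaneously.
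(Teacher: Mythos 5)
Your overall strategy is the same as the paper's: characterize $\lambda_1$ as the infimum of $\|u\|_X^2$ over the unit sphere of $L^2(0,1)$ in $X$, run the direct method on a minimizing sequence, use the compact fractional embedding of \cite[Theorem~7.1]{DPV} (which applies precisely because all functions in $X$ are supported in $[0,1]$) to get strong $L^2$ convergence and hence a nonzero limit, and conclude by lower semicontinuity plus the observation that a nonzero element of $X$ cannot be constant. That is exactly the paper's argument in outline.

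There is, however, a circularity in your implementation. You extract a weakly convergent subsequence in $X$ by invoking Proposition~\ref{hilb}, but in the paper Proposition~\ref{hilb} is proved \emph{after} Proposition~\ref{poinc} and its proof uses the inequality \eqref{x-norm}, which requires the constant $\lambda_1$; so the completeness of $(X,\|\cdot\|_X)$ is not yet available. A related soft spot: to identify the weak limit in $X$ with the strong $L^2(0,1)$ limit you implicitly need the embedding $X\hookrightarrow L^2(0,1)$ to be continuous, which is the very statement being proved. The paper sidesteps both issues by never using weak compactness in $X$ at all: from the strong $L^2$ convergence it passes to an a.e.\ convergent subsequence on $\R$ (after extending $u$ by zero) and applies Fatou's lemma directly to the Gagliardo seminorm to get $\|u\|_X^2\leq\liminf_n\|u_n\|_X^2=\lambda_1$ and $u\in X$. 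Your argument can be repaired without changing its spirit by working in $H^{\nicefrac{1}{2}}(\R)$ instead of $X$ (the minimizing sequence is bounded there, $H^{\nicefrac{1}{2}}(\R)$ is a Hilbert space independently of $\lambda_1$, and the seminorm is weakly lower semicontinuous), but as written the appeal to Proposition~\ref{hilb} is not legitimate at this point in the paper.
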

\begin{proof}
We set
\[S=\left\{u\in X: \ \|u\|_{L^2(0,1)}=1\right\}\]
and equivalently prove that
\begin{equation}\label{lam}
\inf_{u\in S}\|u\|_X^2=\lambda_1>0.
\end{equation}
Clearly $\lambda_1\geq 0$. We first prove that $\lambda_1$ is attained in $S$. Let $(u_n)\subset S$ be a minimizing sequence for \eqref{lam}. In particular, $\sup_{n\in\N}[u]^2_{H^{\nicefrac{1}{2}}(\R)}<\infty$ and $(u_n)$ is bounded in $L^2(0,1)$. In light of \cite[Theorem 7.1]{DPV}, there exists $u\in L^2(0,1)$ such that, up to a subsequence, $u_n\to u$ in $L^2(0,1)$. We extend $u$ by setting $u(x)=0$ for all $x\in\R\setminus(0,1)$, so $u\in L^2(\R)$ and $u_n\to u$ a.e. in $\R$. Fatou's lemma yields
\[\int_{\R^2}\frac{|u(x)-u(y)|^2}{|x-y|^2}dx\leq\liminf_{n}\int_{\R^2}\frac{|u_n(x)-u_n(y)|^2}{|x-y|^2}dx=\lambda_1,\]
hence $u\in X$. Moreover, $\|u\|_{L^2(0,1)}=1$, hence $u\in S$, in particular $u\neq 0$ and $\|u\|_X^2=\lambda_1>0$.
\end{proof}

\noindent
Due to Proposition \ref{poinc}, we can prove further properties of $X$:

\begin{proposition}\label{hilb}
$(X,\|\cdot\|_X)$ is a Hilbert space.
\end{proposition}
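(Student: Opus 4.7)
The plan is to identify the inner product inducing $\|\cdot\|_X$ and then reduce completeness of $(X,\|\cdot\|_X)$ to the known completeness of the ambient Hilbert space $(H^{\nicefrac{1}{2}}(\R),\|\cdot\|_{H^{\nicefrac{1}{2}}(\R)})$ via Proposition \ref{poinc}.

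First I would define, for $u,v\in X$, the bilinear form
\[
\langle u,v\rangle_X=\int_{\R^2}\frac{(u(x)-u(y))(v(x)-v(y))}{|x-y|^2}\,dx\,dy,
\]
and observe that it is symmetric and bilinear, and that $\langle u,u\rangle_X=\|u\|_X^2$. Positivity of this form (hence that it is an inner product) follows from Proposition \ref{poinc}: if $\|u\|_X=0$ then $\|u\|_{L^2(0,1)}=0$ by the Poincaré inequality, whence $u=0$ in $X$. So $\|\cdot\|_X$ is a Hilbert norm and only completeness remains.

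Next I would show that the two norms $\|\cdot\|_X$ and $\|\cdot\|_{H^{\nicefrac{1}{2}}(\R)}$ are equivalent on $X$. One direction, $\|u\|_X\leq \|u\|_{H^{\nicefrac{1}{2}}(\R)}$, is immediate; the other follows from Proposition \ref{poinc} since
\[
\|u\|_{H^{\nicefrac{1}{2}}(\R)}^2=\|u\|_{L^2(\R)}^2+\|u\|_X^2=\|u\|_{L^2(0,1)}^2+\|u\|_X^2\leq (1+\lambda_1^{-1})\|u\|_X^2.
\]
Now take a Cauchy sequence $(u_n)\subset X$ with respect to $\|\cdot\|_X$. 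By the above equivalence it is also Cauchy in $H^{\nicefrac{1}{2}}(\R)$, so there exists $u\in H^{\nicefrac{1}{2}}(\R)$ with $u_n\to u$ in $H^{\nicefrac{1}{2}}(\R)$. Passing to a subsequence with pointwise a.e.\ convergence, and recalling that $u_n=0$ a.e.\ in $\R\setminus(0,1)$, we obtain $u=0$ a.e.\ in $\R\setminus(0,1)$; hence $u\in X$ and $u_n\to u$ with respect to $\|\cdot\|_X$. This proves completeness and concludes the proof.

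I do not expect any serious obstacle. The only slightly delicate point is to justify that the limit $u$ actually lies in $X$, which is handled by the a.e.\ subsequence argument; everything else is a direct consequence of Proposition \ref{poinc} and the fact that $H^{\nicefrac{1}{2}}(\R)$ is already known to be Hilbert.
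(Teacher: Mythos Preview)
Your argument is correct and follows essentially the same route as the paper: define the inner product $\langle\cdot,\cdot\rangle_X$, use Proposition~\ref{poinc} to obtain the equivalence $\|u\|_X\leq\|u\|_{H^{\nicefrac{1}{2}}(\R)}\leq(\lambda_1^{-1}+1)^{1/2}\|u\|_X$, and deduce completeness from that of $H^{\nicefrac{1}{2}}(\R)$. You are in fact more careful than the paper, which simply asserts that ``completeness of $X$ follows at once from that of $H^{\nicefrac{1}{2}}(\R)$'' without explicitly checking that the limit $u$ remains in $X$; your pointwise-a.e.\ subsequence argument closes that gap.
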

\begin{proof}
Clearly the norm $\|\cdot\|_X$ is induced by a inner product, defined for all $u,v\in X$ by
\[\langle u,v\rangle_X=\int_{\R^2}\frac{(u(x)-u(y))(v(x)-v(y))}{|x-y|^2}dxdy.\]
Moreover, by Proposition \ref{poinc} we have for all $u\in X$
\begin{equation}\label{x-norm}
\|u\|_X\leq\|u\|_{H^{\nicefrac{1}{2}}(\R)}\leq(\lambda_1^{-1}+1)^{\frac{1}{2}}\|u\|_X.
\end{equation}
So, completeness of $X$ follows at once from that of $H^{\nicefrac{1}{2}}(\R)$.
\end{proof}

\noindent
We specialize Theorem \ref{oza} to the space $X$:

\begin{corollary}\label{mti}
For all $0<\alpha<2\pi\omega$ there exists $K_\alpha>0$ such that
\[\int_0^1 e^{\alpha u^2}dx\leq K_\alpha\]
for all $u\in X$, $\|u\|_X\leq 1$.
\end{corollary}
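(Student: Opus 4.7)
The plan is to reduce the corollary to Theorem \ref{oza} by a simple rescaling, then use Proposition \ref{poinc} to absorb the $L^2$-norm that appears on the right-hand side of Ozawa's inequality. First, I take $u \in X$ with $\|u\|_X = [u]_{H^{\nicefrac{1}{2}}(\R)} \leq 1$. Identity \eqref{equinorm} gives $\|(-\Delta)^{\nicefrac{1}{4}}u\|_{L^2(\R)} \leq (2\pi)^{-\nicefrac{1}{2}}$, so the rescaled function $v = (2\pi)^{\nicefrac{1}{2}} u$ satisfies $\|(-\Delta)^{\nicefrac{1}{4}} v\|_{L^2(\R)} \leq 1$ and thus is admissible in Theorem \ref{oza}.

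Next, given $0<\alpha<2\pi\omega$, I set $\alpha' = \alpha/(2\pi) \in (0,\omega)$ and apply Theorem \ref{oza} to $v$ with parameter $\alpha'$. Since $\alpha' v^2 = \alpha u^2$ and $\|v\|_{L^2(\R)}^2 = 2\pi\|u\|_{L^2(\R)}^2$, this yields
\[
\int_\R \big(e^{\alpha u^2} - 1\big)\,dx \leq 2\pi H_{\alpha'}\|u\|_{L^2(\R)}^2.
\]
Because $u$ is supported in $(0,1)$, the left-hand side reduces to $\int_0^1 (e^{\alpha u^2} - 1)\,dx$, and Proposition \ref{poinc} bounds $\|u\|_{L^2(\R)} = \|u\|_{L^2(0,1)} \leq \lambda_1^{-1/2}$. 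Adding $1$ to both sides then gives the claim with $K_\alpha = 1 + 2\pi\lambda_1^{-1} H_{\alpha/(2\pi)}$.

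There is really no substantive obstacle here; the only point deserving attention is the factor $(2\pi)^{-\nicefrac{1}{2}}$ in \eqref{equinorm}, which is precisely what widens the admissible range from $(0,\omega)$ in Theorem \ref{oza} to $(0,2\pi\omega)$ in the corollary. The argument uses nothing beyond the identity \eqref{equinorm}, the Ozawa inequality, the Poincaré inequality of Proposition \ref{poinc}, and the support condition built into the definition of $X$.
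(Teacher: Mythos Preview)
Your proof is correct and follows essentially the same approach as the paper: rescale $u$ by $(2\pi)^{1/2}$ to make it admissible in Theorem~\ref{oza}, apply Ozawa's inequality with parameter $\alpha/(2\pi)$, and then use Proposition~\ref{poinc} together with the support condition to obtain the uniform bound $K_\alpha = 1 + 2\pi\lambda_1^{-1}H_{\alpha/(2\pi)}$. The only cosmetic difference is that you express the intermediate estimate in terms of $u$ rather than $v$, which of course gives the same constant.
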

\begin{proof}
Fix $u\in X$ with $\|u\|_X\leq 1$. Set $v=(2\pi)^{\nicefrac{1}{2}}u$, then $v\in H^{\nicefrac{1}{2}}(\R)$ and by (\ref{equinorm}) we have $\|(-\Delta)^{\nicefrac{1}{4}}v\|_{L^2(\R)}\leq 1$. Set $\tilde\alpha=(2\pi)^{-1}\alpha$, so $0<\tilde\alpha<\omega$ and by Theorem \ref{oza} and Proposition \ref{poinc} we have
\begin{align*}
\int_0^1 e^{\alpha u^2}dx & =\int_\R \big[e^{\tilde\alpha v^2}-1\big]dx+1 \\
& \leq H_{\tilde\alpha}\|v\|_{L^2(0,1)}^2+1\leq\frac{2\pi H_{\tilde\alpha}}{\lambda_1}+1:=K_\alpha,
\end{align*}
which concludes the proof.
\end{proof}

\noindent
We point out a important consequence of the results above:

\begin{proposition}\label{sum}
$e^{u^2}\in L^1(0,1)$ for every $u\in X$.
\end{proposition}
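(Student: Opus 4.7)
The plan is to split $u$ as a bounded function plus a remainder of arbitrarily small $X$-norm, treat the bounded piece with a pointwise estimate, and dispatch the remainder with Corollary \ref{mti}. Concretely, for $k>0$ I would introduce the truncation
\[
u_k(x) = \max\{-k,\min\{u(x),k\}\}, \qquad r_k := u-u_k.
\]
Since $u$ vanishes outside $(0,1)$ and the truncation map is pointwise $1$-Lipschitz, one checks immediately that $u_k\in X$ with $\|u_k\|_X\leq\|u\|_X$ and $|u_k|\leq k$ everywhere.

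The crux is showing that $\|r_k\|_X\to 0$ as $k\to+\infty$. For this I would note the pointwise domination
\[
\frac{|r_k(x)-r_k(y)|^2}{|x-y|^2}\leq\frac{4|u(x)-u(y)|^2}{|x-y|^2} \in L^1(\R^2),
\]
together with the pointwise convergence $r_k(x)-r_k(y)\to 0$ for a.e.\ $(x,y)\in\R^2$: indeed, at a.e.\ point $u$ is finite, so as soon as $k>\max\{|u(x)|,|u(y)|\}$ we have $u_k(x)=u(x)$ and $u_k(y)=u(y)$. Dominated Convergence applied to the Gagliardo integrand then yields the required convergence.

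To finish, I would fix $k$ large enough that $2\|r_k\|_X^2<2\pi\omega$ (if $r_k\equiv 0$, then $u=u_k$ is bounded and $e^{u^2}\in L^\infty(0,1)$ trivially). The elementary inequality $(a+b)^2\leq 2a^2+2b^2$ and the bound $|u_k|\leq k$ give
\[
e^{u(x)^2}\leq e^{2u_k(x)^2}\,e^{2r_k(x)^2}\leq e^{2k^2}\,e^{2r_k(x)^2} \quad\text{a.e.\ in }(0,1).
\]
Setting $\alpha=2\|r_k\|_X^2\in(0,2\pi\omega)$ and applying Corollary \ref{mti} to $r_k/\|r_k\|_X$ (which has unit $X$-norm), I obtain $\int_0^1 e^{2r_k^2}\,dx\leq K_\alpha$, and therefore
\[
\int_0^1 e^{u^2}\,dx\leq e^{2k^2}K_\alpha<+\infty.
\]
The only delicate step is the $X$-convergence $u_k\to u$; everything else is a one-line computation.
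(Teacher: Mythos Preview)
Your proof is correct and takes a genuinely different, more elementary route than the paper's. The paper goes through Orlicz space machinery: it introduces the Young function $\phi(t)=(e^{\alpha t^2}-1)/H_\alpha$, establishes a continuous embedding $X\hookrightarrow L_{\phi^*}(0,1)$ via Theorem~\ref{oza}, then invokes the density of $C^\infty_c(0,1)$ in $X$ from \cite{FSV} to place $\alpha^{-1/2}u$ in the Orlicz class $E_\phi$, and finally appeals to a general fact from \cite{KR} to conclude finiteness of $\int_0^1\phi(\alpha^{-1/2}u)\,dx$. Your argument replaces all of this with truncation plus dominated convergence: you obtain approximation by bounded functions in $X$ directly (bypassing the external density result), and finish with the pointwise inequality $e^{u^2}\leq e^{2k^2}e^{2r_k^2}$ and a single call to Corollary~\ref{mti}. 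The paper's route situates the result in Trudinger's original framework and produces the Orlicz embedding as a by-product, while yours is entirely self-contained and needs no outside references. Both ultimately hinge on the same idea---approximate $u$ in $X$ by bounded functions so that the Trudinger--Moser inequality can absorb the remainder---but your mechanism for producing that approximation is more direct.
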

\begin{proof}
We follow Trudinger \cite{T}. Choose $0<\alpha<\omega$ and set for all $t\in\R$
\[
\phi(t)=\frac{e^{\alpha t^2}-1}{H_\alpha} \quad \mbox{($H_\alpha$ defined as in Theorem \ref{oza}).}
\]
We introduce the Orlicz norm induced by $\phi$ putting for all measurable $u:(0,1)\to\R$
\[\|u\|_\phi=\inf\Big\{\gamma>0 \ : \ \int_0^1 \phi\Big(\frac{u}{\gamma}\Big)dx\leq 1\Big\},\]
and the corresponding Orlicz space $L_{\phi^*}(0,1)$, see Krasnosel'ski\u{\i} $\&$ Ruticki\u{\i} \cite[p.67]{KR} for the definition. 
We prove (by identifying a function $v\in X$ with its restriction to $(0,1)$) that
\begin{equation}\label{cont-emb}
X\hookrightarrow L_{\phi^*}(0,1) \quad\mbox{continuously.}
\end{equation}
For all $v\in X\setminus\{0\}$, we set $w=\|v\|_{H^{\nicefrac{1}{2}}(\R)}^{-1}v$, so by (\ref{equinorm})
\[\|(-\Delta)^{\nicefrac{1}{4}}w\|_{L^2(\R)}=\frac{[v]_{H^{\nicefrac{1}{2}}(\R)}}{(2\pi)^{\nicefrac{1}{2}}\|v\|_{H^{\nicefrac{1}{2}}(\R)}}\leq (2\pi)^{-\nicefrac{1}{2}}<1.\]
So, in light of Theorem~\ref{oza}, we have
\[
\int_0^1 \phi\Big(\frac{v}{\|v\|_{H^{\nicefrac{1}{2}}(\R)}}\Big)dx=\int_\R\frac{e^{\alpha w^2}-1}{H_\alpha} dx\leq\|w\|_{L^2(\R)}^2\leq 1,
\]
hence by \eqref{x-norm}
\[\|v\|_\phi\leq\|v\|_{H^{\nicefrac{1}{2}}(\R)}\leq(\lambda_1^{-1}+1)^\frac{1}{2}\|v\|_X.\]
Thus, \eqref{cont-emb} is proved.
\vskip2pt
\noindent
Now fix $u\in X$ and set $\tilde u=\alpha^{-\nicefrac{1}{2}}u$. By the results of Fiscella, Servadei $\&$ Valdinoci \cite{FSV}, we know that $C^\infty_c(0,1)$ is a dense linear subspace of $X$. So, there exists a sequence $(\psi_n)$ in $C^\infty_c(0,1)$ such that $\psi_n\to\tilde u$ in $X$. By \eqref{cont-emb}, we have $\psi_n\to\tilde u$ in $L_{\phi^*}(0,1)$ as well. In particular $\tilde u\in E_\phi$, namely the closure of the set of bounded functions of $X$ in $L_{\phi^*}(0,1)$. From a general result 
on Orlicz spaces (see \cite[formula (10.1), p. 81]{KR}) it follows that
\[\int_0^1\phi(\tilde u)dx<\infty,\]
which immediately yields the conclusion.
\end{proof}

\noindent
We conclude this section with a technical result which we shall use later:

\begin{lemma}\label{lions}
If $(v_n)$ is a sequence in $X$ with $\|v_n\|_X=1$ for all $n\in\N$ and $v_n\rightharpoonup v$ in $X$, $0<\|v\|_X<1$, then for all $0<\alpha<2\pi\omega$ and all $1<p<(1-\|v\|_X^2)^{-1}$ the sequence $(e^{\alpha v_n^2})$ is bounded in $L^p(0,1)$.
\end{lemma}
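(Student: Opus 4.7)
The plan is to use the classical P.L. Lions decomposition $v_n = v + (v_n - v)$, where the second summand carries the weak-to-strong discrepancy. Since $X$ is a Hilbert space (Proposition \ref{hilb}) and $\|v_n\|_X = 1$, expanding
\[\|v_n - v\|_X^2 = 1 - 2\langle v_n, v\rangle_X + \|v\|_X^2\]
and using the weak convergence $v_n \rightharpoonup v$ gives $\|v_n - v\|_X^2 \to 1 - \|v\|_X^2$. This is the key input: the oscillating part $v_n - v$ has asymptotic squared norm $1 - \|v\|_X^2 < 1$, which is exactly what is needed to leave room in the Trudinger--Moser threshold.

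Fix $\alpha$ and $p$ as in the statement. Since $\alpha < 2\pi\omega$ and $p(1-\|v\|_X^2) < 1$, there is a positive gap $2\pi\omega - \alpha p(1-\|v\|_X^2) > 0$. Choose $\varepsilon>0$, an H\"older exponent $q > 1$ close to $1$, and $\delta > 0$ small enough that
\[q\,\alpha p (1+\varepsilon)\bigl((1-\|v\|_X^2) + \delta\bigr) < 2\pi\omega,\]
and take $N \in \N$ so that $\|v_n - v\|_X^2 < (1-\|v\|_X^2) + \delta$ for $n \geq N$. Apply Young's inequality $(a+b)^2 \leq (1+\varepsilon)a^2 + (1+1/\varepsilon)b^2$ pointwise and then H\"older with exponents $(q,q')$:
\[\int_0^1 e^{\alpha p\, v_n^2}\,dx \leq \left(\int_0^1 e^{q\alpha p (1+\varepsilon)(v_n - v)^2}\,dx\right)^{\!1/q}\!\!\left(\int_0^1 e^{q'\alpha p (1+1/\varepsilon) v^2}\,dx\right)^{\!1/q'}.\]

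For the first factor, I normalize $w_n = (v_n - v)/\|v_n - v\|_X$, so $\|w_n\|_X = 1$; the exponent becomes $\beta_n w_n^2$ with $\beta_n = q\alpha p (1+\varepsilon)\|v_n - v\|_X^2$, and by the choice of parameters $\beta_n \leq \beta$ for some $\beta < 2\pi\omega$ and $n \geq N$. Then Corollary \ref{mti} yields the uniform bound $\int_0^1 e^{\beta w_n^2}\,dx \leq K_\beta$. For the second factor, since $cv \in X$ for every $c > 0$, Proposition \ref{sum} applied to $cv$ shows $e^{c^2 v^2} \in L^1(0,1)$; choosing $c = \sqrt{q'\alpha p (1+1/\varepsilon)}$ makes the second integral a fixed finite constant, independent of $n$. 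Combining these two bounds gives the boundedness of $(e^{\alpha v_n^2})$ in $L^p(0,1)$.

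The main obstacle is the simultaneous calibration of the three parameters $\varepsilon$, $q$, $\delta$ in terms of the positive gap $2\pi\omega - \alpha p(1-\|v\|_X^2)$: they must be chosen so that, after applying both Young and H\"older, the effective coefficient $q\alpha p(1+\varepsilon)\|v_n-v\|_X^2$ in front of $w_n^2$ remains strictly below $2\pi\omega$ for all large $n$. Once this is arranged, the rest is a routine assembly of Corollary \ref{mti} and Proposition \ref{sum}, and the hypothesis $\|v\|_X < 1$ is used precisely to guarantee that the admissible range of $p$ is non-trivial.
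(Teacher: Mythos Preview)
Your proof is correct and follows essentially the same Lions-type strategy as the paper: both exploit $\|v_n-v\|_X^2\to 1-\|v\|_X^2<1/p$ together with Corollary~\ref{mti} for the oscillating part and Proposition~\ref{sum} for the fixed part. The only cosmetic difference is that the paper keeps the exact expansion $v_n^2=(v_n-v)^2+2(v_n-v)v+v^2$ and applies a three-way H\"older inequality (treating the cross term separately), whereas you absorb the cross term via Young's inequality $(a+b)^2\le(1+\varepsilon)a^2+(1+1/\varepsilon)b^2$ and then use a two-way H\"older split; both routes land on the same two ingredients.
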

\begin{proof}
By applying the generalized H\"older inequality with exponents $\gamma_1,\gamma_2,\gamma_3>1$ such that $\gamma_1\alpha<2\pi\omega$ and
$\gamma_1^{-1}+\gamma_2^{-1}+\gamma_3^{-1}=1$, we have
\begin{align*}
\int_0^1 e^{p\alpha v_n^2}dx &=\int_0^1 e^{p\alpha [(v_n-v)^2+2(v_n-v)v+v^2 ]}dx \\
& \leq\left[\int_0^1 e^{\gamma_1 p\alpha(v_n-v)^2}dx\right]^\frac{1}{\gamma_1}\left[\int_0^1 e^{2\gamma_2 p\alpha(v_n-v)v}dx\right]^\frac{1}{\gamma_2}\left[\int_0^1 e^{\gamma_3 p\alpha v^2}dx\right]^\frac{1}{\gamma_3}.
\end{align*}
We estimate the three integrals separately. First we note that
\[\|v_n-v\|^2_X=1-2\langle v_n,v\rangle_X+\|v\|_X^2\to 1-\|v\|_X^2<\frac{1}{p},\]
so for $n\in\N$ big enough we have $\|v_n-v\|_X^2<\nicefrac{1}{p}$. Hence, by Corollary \ref{mti}
\[\int_0^1 e^{\gamma_1 p\alpha(v_n-v)^2}dx\leq\int_0^1 e^{\gamma_1\alpha\left(\frac{v_n-v}{\|v_n-v\|_X}\right)^2}dx\leq K_{\gamma_1\alpha}.\]
Besides, by Corollary \ref{mti} and Proposition \ref{sum} we have for some $c_1>0$
\begin{align*}
\int_0^1 e^{2\gamma_2 p\alpha(v_n-v)v}dx &\leq \int_0^1e^{2\left(\frac{\alpha}{2}\right)^{\nicefrac{1}{2}}\frac{v_n-v}{\|v_n-v\|_X}(c_1 v)}dx\leq\int_0^1e^{\frac{\alpha}{2}\left(\frac{v_n-v}{\|v_n-v\|_X}\right)^2+(c_1 v)^2}dx \\
& \leq\left[\int_0^1 e^{\alpha\left(\frac{v_n-v}{\|v_n-v\|_X}\right)^2}dx\right]^{\nicefrac{1}{2}}\left[\int_0^1 e^{2c_1^2 v^2}dx\right]^{\nicefrac{1}{2}}\leq K_\alpha\left[\int_0^1 e^{2c_1^2 v^2}dx\right]^{\nicefrac{1}{2}}.
\end{align*}
Finally, clearly
\[\int_0^1 e^{\gamma_3 p\alpha v^2}dx<\infty.\]
Thus, $(e^{\alpha v_n^2})$ is bounded in $L^p(0,1)$.
\end{proof}

\section{Proofs of Theorems~\ref{subcrit} and \ref{subcrit-sym}}

\noindent
In this section we act under ${\bf H}$. We give our problem a variational formulation by setting for all $u\in X$
\[\varphi(u)=\frac{\|u\|_X^2}{4\pi}-\int_0^1 F(u)dx.\]
Proposition \ref{sum}, ${\bf H}(i)$ and ${\bf H}(iv)$ imply that $\varphi\in C^1(X)$. By \eqref{equinorm}, its derivative is given for all $u,v\in X$ by
\begin{align*}
\langle\varphi'(u),v\rangle &=\frac{1}{2\pi}\langle u,v\rangle_X-\int_0^1 f(u)v dx \\
&= \int_\R(-\Delta)^{\nicefrac{1}{4}}u(-\Delta)^{\nicefrac{1}{4}}v dx-\int_0^1 f(u)v dx.
\end{align*}
In particular, if $u\in X$ and $\varphi'(u)=0$, then for all $v\in X$
\begin{equation}\label{weaks}
\int_\R(-\Delta)^{\nicefrac{1}{4}}u(-\Delta)^{\nicefrac{1}{4}}v dx=\int_0^1 f(u)v dx,
\end{equation}
namely $u$ is a (weak) solution of \eqref{problema-1d}.
\vskip2pt
\noindent
First we point out some consequences of ${\bf H}$. By ${\bf H}(iv)$, for all $\alpha>0$ there exists $c_2>0$ such that
\begin{equation}\label{grow}
|f(t)|\leq c_2 e^{\alpha t^2}, \,\,\quad \mbox{for all $t\in\R$.}
\end{equation}
By virtue of ${\bf H}(i)$, there exists $c_3>0$ such that
\begin{equation}\label{expo}
F(t)\geq c_3 e^\frac{|t|}{M}, \quad \mbox{for all $|t|\geq t_0$.}
\end{equation}
%Indeed, by ${\bf H}(i)$ we have for all $|t|\geq t_0$
%\[\frac{d}{dt}\left[F(t)e^{-\frac{|t|}{M}}\right]=e^{-\frac{|t|}{M}}\left[f(t)-\frac{F(t)}{M}\right]\geq 0,\]
%which implies at once (\ref{expo}). 
Finally, by ${\bf H}(i)$ and ${\bf H}(ii)$, for all $\varepsilon>0$ there exists $t_\varepsilon>0$ such that
\begin{equation}\label{ar}
F(t)\leq\varepsilon f(t)t, \,\,\quad\text{for all $|t|\geq t_\eps$}.
\end{equation}
\smallskip
\noindent
The following lemma shows a compactness property of $\varphi$:

\begin{lemma}\label{ps}
$\varphi$ satisfies the Palais-Smale condition at every level $c\in\R$.
\end{lemma}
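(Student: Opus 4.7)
The plan is to verify the Palais--Smale condition in four stages: boundedness of the PS sequence in $X$, extraction of a weak/strong subsequence, an $L^p$-control of $f(u_n)$ through Corollary~\ref{mti}, and finally strong convergence in $X$ exploiting that $X$ is Hilbert.

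Let $(u_n)\subset X$ be a PS sequence at level $c$. The first step is to bound $\|u_n\|_X$. The standard identity
\[
\varphi(u_n) - \tfrac{1}{2}\langle\varphi'(u_n),u_n\rangle = \int_0^1\Bigl(\tfrac{1}{2}f(u_n)u_n - F(u_n)\Bigr)dx
\]
together with $(ii)$ shows that the right-hand side is non-negative and at most $c+1+o(\|u_n\|_X)$. This is the main obstacle, since $(ii)$ is the borderline Ambrosetti--Rabinowitz case ($\theta=2$) and does not yield boundedness on its own. To overcome it I would combine $(i)$ and $(ii)$ to get, for $|t|$ sufficiently large, $F(t)\leq \tfrac{M}{|t|}f(t)t\leq \tfrac14 f(t)t$, so that $\tfrac12 f(t)t - F(t)\geq \tfrac14 f(t)t$. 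Splitting the domain according to $\{|u_n|\geq R\}$ and its complement, and using the uniform bound on $f$ for $|t|\leq R$, this gives $\int_0^1 f(u_n)u_n\,dx \leq C_1 + o(\|u_n\|_X)$. Inserting this into $\langle\varphi'(u_n),u_n\rangle = \|u_n\|_X^2/(2\pi) - \int_0^1 f(u_n)u_n\,dx = o(\|u_n\|_X)$ then yields boundedness of $\|u_n\|_X$.

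Passing to a subsequence, $u_n\rightharpoonup u$ in $X$, and by the compact embedding $X\hookrightarrow L^q(0,1)$ for every $q\in[1,\infty)$ (see \cite[Theorem~7.1]{DPV}), $u_n\to u$ strongly in every $L^q(0,1)$ and a.e. in $(0,1)$. The next step is to upgrade $f(u_n)$ to a bounded family in some $L^p(0,1)$ with $p>1$. Setting $C_0=\sup_n \|u_n\|_X$, the subcritical hypothesis $(iv)$ gives $|f(t)|\leq c_2 e^{\alpha t^2}$ for any prescribed $\alpha>0$; choosing $\alpha,p$ small enough so that $p\alpha C_0^2 < 2\pi\omega$, Corollary~\ref{mti} applied to $u_n/C_0$ bounds
\[
\int_0^1 |f(u_n)|^p\,dx \leq c_2^p \int_0^1 e^{p\alpha C_0^2 (u_n/C_0)^2}\,dx \leq c_2^p\, K_{p\alpha C_0^2}.
\]

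Finally, with $p'$ conjugate to $p$, H\"older's inequality and the strong $L^{p'}$-convergence give $\int_0^1 f(u_n)(u_n-u)\,dx\to 0$. Since $\varphi'(u_n)\to 0$ in $X^*$ and $(u_n-u)$ is bounded in $X$, also $\langle\varphi'(u_n),u_n-u\rangle\to 0$, whence $\langle u_n, u_n-u\rangle_X\to 0$. Combined with $\langle u, u_n-u\rangle_X\to 0$ from weak convergence, subtraction gives $\|u_n-u\|_X^2\to 0$, completing the proof. The subtle point throughout is the borderline AR estimate used for boundedness; the exponential growth is harmless in the subcritical regime thanks to $(iv)$ and the quantitative form of Corollary~\ref{mti}.
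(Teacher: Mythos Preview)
Your proof is correct and follows essentially the same line as the paper's: the boundedness step is exactly the paper's argument (the paper packages your observation $F(t)\le\varepsilon f(t)t$ for large $|t|$ as inequality~\eqref{ar}), and the $L^p$-bound on $f(u_n)$ via Corollary~\ref{mti} is identical. The only difference is the endgame: the paper first shows $f(u_n)\rightharpoonup f(u)$ in $L^2$, deduces that $u$ is a critical point, and then obtains $\|u_n\|_X\to\|u\|_X$; your route through $\langle u_n,u_n-u\rangle_X\to 0$ is a slightly more direct Hilbert-space variant that bypasses identifying $u$ as a solution, at the cost of not recording that extra information.
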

\begin{proof}
Let $(u_n)$ be a sequence in $X$ such that $\varphi(u_n)\to c$ ($c\in\R$) and $\varphi'(u_n)\to 0$ in $X^*$. We need to show that $(u_n)$ has a convergent subsequence in $X$.
By (\ref{ar}), for all $0<\varepsilon<\nicefrac{1}{2}$ we can find $c_4>0$ such that for all $t\in\R$
\[F(t)\leq\varepsilon f(t)t+c_4.\]
For $n\in\N$ big enough we have $\varphi(u_n)\leq c+1$ and $\|\varphi'(u_n)\|_{X^*}\leq 1$, so
\begin{align*}
c+1 & \geq\frac{\|u_n\|_X^2}{4\pi}-\int_0^1[\varepsilon f(u_n)u_n+c_4]dx=\left(\frac{1}{2}-\varepsilon\right)\frac{\|u_n\|_X^2}{2\pi}+\varepsilon\langle\varphi'(u_n),u_n\rangle-c_4 \\
& \geq \left(\frac{1}{2}-\varepsilon\right)\frac{\|u_n\|_X^2}{2\pi}-\varepsilon\|u_n\|_X-c_4.
\end{align*}
Thus, $(u_n)$ is bounded in $X$. By Proposition \ref{poinc}, $(u_n)$ is bounded in $H^{\nicefrac{1}{2}}(\R)$ as well. By \cite[Theorem 7.1 and Theorem 6.10]{DPV},
passing to a subsequence we may assume that $u_n\rightharpoonup u$ in both $X$ 
and $H^{\nicefrac{1}{2}}(\R)$, and that $u_n\to u$ in $L^q(0,1)$ for all $q\geq 1$ and $u_n(x)\to u(x)$ a.e.\ in $(0,1)$. 
In particular, there exists $c_5>0$ such that $\|u_n\|_X^2\leq c_5$, for all $n\in\N$.
Observe that $(f(u_n))$ is bounded in $L^2(0,1)$. Indeed, by choosing
$0<\alpha<\pi\omega/c_5,$ by Corollary \ref{mti} and \eqref{grow} we get
\begin{equation}
\label{Lim-2}
\int_0^1 f^2(u_n)dx\leq c_2^2\int_0^1 e^{2\alpha u_n^2}dx\leq 
c_2^2\int_0^1 e^{2\alpha c_5 \left(\frac{u_n}{\|u_n\|_X}\right)^2}dx\leq c_2^2K_{2\alpha c_5}.
\end{equation}
Passing to a subsequence, we have $f(u_n)\rightharpoonup f(u)$ in $L^2(0,1)$. As a consequence, for all $v\in X$ we have
\[
\langle\varphi'(u),v\rangle=\frac{1}{2\pi}
\langle u,v\rangle_X-\int_0^1 f(u)v dx=\lim_n\langle\varphi'(u_n),v\rangle=0,
\]
namely $u$ is a solution of \eqref{problema-1d}. Observe that
\[
\lim_n\int_0^1 f(u_n) u_ndx=\int_0^1 f(u) u dx,
\]
since by \eqref{Lim-2} and $f(u_n)\rightharpoonup f(u)$ in $L^2(0,1)$ it holds
\begin{align}
\label{convfss}
&\Big|\int_0^1 f(u_n) u_n dx -\int_0^1 f(u) u dx\Big| \\
&\leq 
\|f(u_n)\|_{L^2(0,1)}\|u_n-u\|_{L^2(0,1)} 
+\Big|\int_0^1 (f(u_n)-f(u))u dx\Big|. \notag
\end{align}
In turn we have
\begin{equation*}
\lim_n \frac{\|u_n\|_X^2}{2\pi} =\lim_n\left[\int_0^1 f(u_n)u_n dx+\langle \varphi'(u_n),u_n\rangle\right]
=\int_0^1 f(u)u dx= \frac{\|u\|_X^2}{2\pi}, 
\end{equation*}
which immediately yields the assertion.
\end{proof}

\noindent
The following lemmas deal with the mountain pass geometry for $\varphi$:

\begin{lemma}\label{mount1}
There exist $\rho,a>0$ such that $\varphi(u)\geq a$ for all $u\in X$ with $\|u\|_X=\rho$.
\end{lemma}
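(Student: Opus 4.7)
The plan is to obtain the standard mountain pass lower bound near the origin by combining the quadratic control from $\mathbf{H}(iii)$ with a super-quadratic exponential remainder controlled by Corollary~\ref{mti}. First, from $\mathbf{H}(iii)$ there exist $\delta>0$ and $r>0$ such that
\[
F(t)\leq\Bigl(\frac{\lambda_1}{4\pi}-\delta\Bigr)t^2,\qquad\text{for all }|t|\leq r,
\]
while from $\mathbf{H}(iv)$ together with $f(0)=0$ we deduce, for any fixed $\alpha>0$ and any exponent $q>2$, a constant $C=C(\alpha,q,r)>0$ such that
\[
F(t)\leq C|t|^q e^{\alpha t^2},\qquad\text{for all }|t|\geq r.
\]
Adding these estimates (and absorbing $|t|\leq r$ into the quadratic part) yields the global pointwise bound
\[
F(t)\leq\Bigl(\frac{\lambda_1}{4\pi}-\delta\Bigr)t^2+C|t|^q e^{\alpha t^2},\qquad\text{for all }t\in\R.
\]

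Next I would integrate this inequality against $u\in X$ with $\|u\|_X=\rho\leq 1$. Poincaré (Proposition~\ref{poinc}) turns the quadratic term into
\[
\Bigl(\frac{\lambda_1}{4\pi}-\delta\Bigr)\int_0^1 u^2\,dx\leq\Bigl(\frac{1}{4\pi}-\frac{\delta}{\lambda_1}\Bigr)\|u\|_X^2,
\]
so this piece is strictly dominated by $\|u\|_X^2/(4\pi)$, leaving a positive quadratic remainder of order $(\delta/\lambda_1)\|u\|_X^2$. For the exponential term, Cauchy--Schwarz gives
\[
\int_0^1 |u|^q e^{\alpha u^2}\,dx\leq\Bigl(\int_0^1 |u|^{2q}\,dx\Bigr)^{\nicefrac{1}{2}}\Bigl(\int_0^1 e^{2\alpha u^2}\,dx\Bigr)^{\nicefrac{1}{2}}.
\]
The first factor is bounded by $c\,\|u\|_X^q$ via the continuous embedding $X\hookrightarrow L^{2q}(0,1)$ (inherited from $H^{\nicefrac{1}{2}}(\R)\hookrightarrow L^{2q}(0,1)$ in one dimension and \eqref{x-norm}). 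The second factor is where I must invoke Corollary~\ref{mti}: writing $u=\rho(u/\|u\|_X)$ we have
\[
\int_0^1 e^{2\alpha u^2}\,dx=\int_0^1 e^{2\alpha\rho^2\,(u/\|u\|_X)^2}\,dx\leq K_{2\alpha\rho^2},
\]
provided $2\alpha\rho^2<2\pi\omega$. Fixing any $\alpha>0$ and then restricting $\rho$ so that $\rho^2<\pi\omega/\alpha$ secures this.

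Combining the two estimates, for $\|u\|_X=\rho$ small enough one obtains
\[
\varphi(u)\geq\frac{\delta}{\lambda_1}\rho^2-C'\rho^q,
\]
with $C'>0$ independent of $u$ and $q>2$. The right-hand side is of the form $\rho^2(\delta/\lambda_1-C'\rho^{q-2})$, which is strictly positive for all $\rho$ sufficiently small. Choosing such a $\rho$ and setting $a:=\rho^2(\delta/\lambda_1-C'\rho^{q-2})/2$ yields the claim. The only delicate point is the simultaneous choice of $\alpha$ and $\rho$ so that Corollary~\ref{mti} applies and the exponent $q>2$ can be freely chosen; this is precisely where the Trudinger--Moser inequality \eqref{ourTM} replaces the usual Sobolev embedding available in the subcritical setting.
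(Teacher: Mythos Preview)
Your argument is correct and follows essentially the same route as the paper: split $F$ into a sub-$\lambda_1$ quadratic part via $\mathbf{H}(iii)$ and a $|t|^q e^{\alpha t^2}$ remainder, absorb the quadratic part by Poincar\'e, and control the remainder by H\"older (the paper uses a general exponent $r>1$ with $r\alpha<2\pi\omega$, you take $r=2$) together with Corollary~\ref{mti} and the embedding $X\hookrightarrow L^p$.

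One small point worth tightening: as written, your Trudinger--Moser bound $K_{2\alpha\rho^2}$ depends on $\rho$, so the constant $C'$ in $\varphi(u)\geq(\delta/\lambda_1)\rho^2-C'\rho^q$ is a priori $\rho$-dependent, and the concluding step ``positive for all $\rho$ sufficiently small'' needs this dependence to be harmless. The paper sidesteps the issue by fixing $\alpha$ (and the H\"older exponent) small enough from the outset so that Corollary~\ref{mti} applies uniformly for every $\|u\|_X\leq 1$; in your setup the same is achieved by simply taking $\alpha<\pi\omega$ and $\rho\leq 1$, so that $2\alpha\rho^2\leq 2\alpha<2\pi\omega$ and hence $K_{2\alpha\rho^2}\leq K_{2\alpha}$, making $C'$ independent of $\rho$.
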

\begin{proof}
By ${\bf H}(iii)$ there exist $0<\mu<\lambda_1$ and $\delta>0$ such that for all $|t|<\delta$ we have $F(t)\leq\mu t^2/(4\pi)$. Fix $q>2$, $0<\alpha<2\pi\omega$ and $r>1$ such that $r\alpha<2\pi\omega$ as well. By (\ref{grow}) there exists $c_6>0$ such that for all $|t|\geq\delta$ we have $F(t)\leq c_6 e^{\alpha t^2}|t|^q$. 
Summarizing, for all $t\in\R$, we obtain
\[F(t)\leq\frac{\mu t^2}{4\pi}+c_6 e^{\alpha t^2}|t|^q.\]
In what follows we use the estimate above, Proposition \ref{poinc}, Corollary \ref{mti} and the continuous embedding $X\hookrightarrow L^{r'q}(0,1)$. For all $u\in X$, $\|u\|_X\leq 1$ we have (for a convenient $c_7>0$)
\begin{align*}
\varphi(u) & \geq \frac{\|u\|_X^2}{4\pi}-\int_0^1\Big[\frac{\mu u^2}{4\pi}+c_6 e^{\alpha u^2}|u|^q \Big]dx \\
& \geq \left(1-\frac{\mu}{\lambda_1}\right)\frac{\|u\|_X^2}{4\pi}-c_6\Big(\int_0^1 e^{r\alpha u^2}dx\Big)^{1/r}\Big(\int_0^1 |u|^{r'q}\Big)^{1/r'} \\
& \geq \left(1-\frac{\mu}{\lambda_1}\right)\frac{\|u\|_X^2}{4\pi}-c_7 \|u\|_X^q.
\end{align*}
Set for all $t\geq 0$
\[g(t)=\left(1-\frac{\mu}{\lambda_1}\right)\frac{t^2}{4\pi}-c_7 t^q.\]
By a straightforward computation we find $0<\rho<1$ such that $g(\rho)=a>0$. So, for all $u\in X$ with $\|u\|_X=\rho$ we have $\varphi(u)\geq a$.
\end{proof}

\begin{lemma}\label{asympt}
If $Y\subset X$ is a linear subspace generated by bounded functions and ${\rm dim}(Y)<\infty$, then $\sup_{u\in Y}\varphi(u)<\infty$ and 
$$
\lim_{\substack{\|u\|_X\to\infty \\ u\in Y}}\varphi(u)=-\infty.
$$
\end{lemma}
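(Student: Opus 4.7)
The plan is to exploit the exponential lower bound \eqref{expo} on $F$, combined with Jensen's inequality and the equivalence of norms on the finite-dimensional space $Y$. The guiding observation is that \eqref{expo} forces $\int_0^1 F(u)\,dx$ to grow at least like a single exponential in $\|u\|_{L^1(0,1)}$, which on $Y$ is comparable to $\|u\|_X$; this exponential term will then dominate the quadratic contribution $\|u\|_X^2/(4\pi)$ and drag $\varphi$ to $-\infty$.

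First I would globalize \eqref{expo}: since ${\bf H}(ii)$ gives $F\geq 0$ on $\R$ and $c_3 e^{|t|/M}\leq c_3 e^{t_0/M}$ for $|t|\leq t_0$, one obtains
\[F(t)\geq c_3 e^{|t|/M}-c_3 e^{t_0/M},\qquad\text{for all }t\in\R.\]
Integrating over $(0,1)$ and applying Jensen's inequality to the convex function $s\mapsto e^{s/M}$ (using that $(0,1)$ carries unit Lebesgue measure) would then give
\[\int_0^1 F(u)\,dx\geq c_3\,e^{\|u\|_{L^1(0,1)}/M}-c_3\,e^{t_0/M}.\]

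Next I would check that $\|\cdot\|_{L^1(0,1)}$ defines an actual norm on $Y$: if $u\in X$ vanishes a.e.\ on $(0,1)$, then by the definition of $X$ in \eqref{Xdef} it vanishes a.e.\ on $\R$ as well, hence $u=0$. Since $\dim(Y)<\infty$, all norms on $Y$ are equivalent, so there exists $c_Y>0$ with $\|u\|_{L^1(0,1)}\geq c_Y\|u\|_X$ for every $u\in Y$. Substituting into the previous inequality yields
\[\varphi(u)\leq\frac{\|u\|_X^2}{4\pi}-c_3\,e^{c_Y\|u\|_X/M}+c_3\,e^{t_0/M},\qquad\text{for all }u\in Y,\]
and, read as a function of $r=\|u\|_X\in[0,\infty)$, the right-hand side is bounded above on $[0,\infty)$ and tends to $-\infty$ as $r\to\infty$, the exponential swallowing the quadratic. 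Both assertions of the lemma follow immediately.

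No step looks like a real obstacle; the only point requiring a moment of care is the fact that $\|\cdot\|_{L^1(0,1)}$ really is a norm (and not merely a seminorm) on $Y$, which rests on the containment $Y\subset X$. The assumption that $Y$ be generated by bounded functions is not visibly needed along this route, but it guarantees a priori finiteness of $\int_0^1 F(u)\,dx$ pointwise on $Y$ and is presumably convenient for the concrete subspaces $Y$ to which this lemma will later be applied in the linking/genus arguments.
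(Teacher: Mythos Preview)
Your argument is correct and follows the same overall architecture as the paper's proof: derive from \eqref{expo} a superquadratic lower bound for $\int_0^1 F(u)\,dx$ in terms of a Lebesgue norm of $u$, then use equivalence of norms on the finite-dimensional space $Y$ to dominate the quadratic term $\|u\|_X^2/(4\pi)$. The only difference is in how the lower bound is extracted: the paper does not keep the exponential and does not invoke Jensen, but simply notes that \eqref{expo} forces $F(t)\geq |t|^p-c_8$ for any fixed $p>2$ and some $c_8>0$, whence
\[
\varphi(u)\leq \frac{\|u\|_X^2}{4\pi}-\|u\|_{L^p(0,1)}^p+c_8\leq \frac{\|u\|_X^2}{4\pi}-c_9\|u\|_X^p+c_8
\]
for $u\in Y$, after comparing $\|\cdot\|_{L^p(0,1)}$ with $\|\cdot\|_X$ on $Y$. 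Your exponential-plus-Jensen variant yields a nominally sharper upper bound at the cost of one extra device, while the paper's polynomial bound is slightly more direct; as you observed, neither route actually uses the hypothesis that $Y$ be generated by bounded functions.
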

\begin{proof}
Fix $p>2$. By (\ref{expo}), we have $|t|^{-p} F(t)\to\infty$ for $|t|\to\infty$,
so we can find $c_{8}>0$ such that for all $t\in\R$ we have $F(t)\geq |t|^p-c_{8}$. 
Whence, for some $c_9>0$, we obtain for all $u\in Y$
\begin{equation*}
\varphi(u)  \leq  \frac{\|u\|_X^2}{4\pi}-\|u\|_{L^p(0,1)}^p+c_{8}
\leq \frac{\|u\|_X^2}{4\pi}-c_9\|u\|_X^p+c_{8},
\end{equation*}
which readily yields the assertion.
\end{proof}

\noindent
{\em Proof of Theorem~\ref{subcrit} concluded.} The existence of one solution follows
by applying the Mountain Pass Theorem (see Rabinowitz \cite[Theorem 2.2]{rab-book}) to $\varphi$ and combining Lemmas \ref{ps}, \ref{mount1} and \ref{asympt}.
\vskip2pt
\noindent
Concerning the multiplicity, we apply \cite[Theorem 9.12]{rab-book}. \qed

\vskip5pt
\noindent
{\em Proof of Theorem~\ref{subcrit-sym} concluded.} 
Given a nonnegative function $u\in X$ and any $H=(a,\infty)$ with $a<0$, 
we have the following inequality for the polarization $u^H$ (see Baernstein \cite[Theorem 2, p. 58]{baer})
\[
\int_{\R^2}\frac{(u^H(x)-u^H(y))^2}{|x-y|^2}dxdy
\leq \int_{\R^2}\frac{(u(x)-u(y))^2}{|x-y|^2}dxdy,
\]
which implies that $\varphi(u^H)\leq \varphi(u)$, for all nonnegative $u$ of $X$.
\vskip2pt
\noindent
The existence of an even solution on $(-1,1)$, decreasing on $(0,1)$, equal to zero on $\R\setminus(-1,1)$ follows
by the (symmetric) Mountain Pass Theorem of Van Schaftingen
\cite[Theorem 3.2]{vanSch} applied to the functional $\varphi$ on $X$ with the $V$ therein
chosen as $V=L^2(-1,1)$, and on account of Lemmas \ref{ps}, 
\ref{mount1} and \ref{asympt}. \qed

\begin{example}\label{ex1}\rm
Fix $1<q<2$ and $0<\mu<\nicefrac{\lambda_1}{2\pi}$. 
Define $f:\R\to\R$ by setting, for all $t\geq 0$,
\[f(t)=\left\{
	\begin{array}{ll}
		\mu t  & \mbox{if } 0\leq t\leq 1, \\
		\mu t^{q-1} e^{t^q-1} & \mbox{if } t>1,
	\end{array}
\right.
\]
and $f(t)=-f(-t)$ for all $t<0$. It is easily seen that $f$ is continuous, odd and satisfies ${\bf H}$. By Theorem \ref{subcrit}, then, the corresponding problem \eqref{problema-1d} admits infinitely many solutions.
\end{example}

\section{Proof of Theorem~\ref{C-state}}

\noindent
In this section, we consider the critical case, that is, we act under ${\bf H'}$.
\vskip2pt
\noindent
An important remark here is that (\ref{grow}) holds only for $\alpha>\alpha_0$. We prove that the Palais-Smale condition is satisfied only for levels in a certain range:

\begin{lemma}\label{ps-2}
If $f$ satisfies ${\bf H'}$, then $\varphi$ satisfies the Palais-Smale condition at any level $c<\omega/(2\alpha_0)$.
\end{lemma}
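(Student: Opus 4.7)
The plan is to adapt the scheme of Lemma \ref{ps}, supplemented by Lemma \ref{lions} to handle the critical growth. Let $(u_n) \subset X$ be a Palais--Smale sequence at level $c$. Since ${\bf H}'$ retains the Ambrosetti--Rabinowitz conditions ${\bf H}(i)$--$(ii)$, the argument from the proof of Lemma \ref{ps} will yield that $(u_n)$ is bounded in $X$. Along a subsequence, I can extract $u \in X$ with $u_n \rightharpoonup u$ in $X$, $u_n \to u$ in $L^q(0,1)$ for every $q \geq 1$, $u_n \to u$ a.e.\ in $(0,1)$, and $\|u_n\|_X^2 \to M^2$; I then set $L^2 := M^2 - \|u\|_X^2 \geq 0$, so $\|u_n - u\|_X^2 \to L^2$. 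A standard argument using ${\bf H}(ii)$ (to bound $(f(u_n))$ in $L^1(0,1)$) together with the pointwise convergence should then show that $u$ is a weak solution of \eqref{problema-1d}; in particular $\|u\|_X^2 = 2\pi \int_0^1 f(u)u\,dx$ and $\varphi(u) \geq 0$.

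The main task is to prove $L = 0$. I plan to use the Hilbert identity $\|u_n\|_X^2 = \|u\|_X^2 + \|u_n - u\|_X^2 + o(1)$ together with $\varphi(u_n) \to c$ to read off
\[
\lim_n \int_0^1 F(u_n)\,dx = \frac{M^2}{4\pi} - c,
\]
and then Fatou's lemma (using $F \geq 0$) should yield the two-sided control $\varphi(u) \leq c$ and $L^2 \geq 4\pi(c - \varphi(u))$. The next step is to sharpen Fatou into the genuine $L^1$-convergence $\int F(u_n) \to \int F(u)$; this would force $L^2 = 4\pi(c - \varphi(u)) \leq 4\pi c < 2\pi\omega/\alpha_0$. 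For the $L^1$-convergence I plan to invoke Vitali's theorem, supplying the needed equi-integrability via Lemma \ref{lions} applied to the normalized sequence $w_n = u_n/\|u_n\|_X$, whose weak limit $u/M$ has norm $< 1$ when $u \neq 0$: this will deliver $(e^{\beta u_n^2})$ bounded in $L^1(0,1)$ for every $\beta < 2\pi\omega/L^2$, and via ${\bf H}'(iv)$ will bound $(f(u_n))$ in some $L^p(0,1)$ with $p > 1$ precisely in the regime $L^2 < 2\pi\omega/\alpha_0$. The degenerate subcases $u = 0$ (treat using Corollary \ref{mti} directly) and $L = 0$ (immediate strong convergence) are routine.

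Once the bound $L^2 < 2\pi\omega/\alpha_0$ is secured, H\"older's inequality combined with $u_n \to u$ in $L^{p'}(0,1)$ should deliver $\int_0^1 f(u_n)(u_n - u)\,dx \to 0$. Substituting into $\langle \varphi'(u_n), u_n - u\rangle \to 0$, and using $\langle u_n, u\rangle_X \to \|u\|_X^2$ from weak convergence, I expect $\|u_n\|_X \to \|u\|_X$; since $X$ is a Hilbert space, weak convergence combined with convergence of norms upgrades to strong convergence, completing the argument. The principal obstacle is the apparent circularity between the target bound $L^2 < 2\pi\omega/\alpha_0$ and the equi-integrability supplied by Lemma \ref{lions} (which itself depends on $L^2$); the plan to resolve it is a careful bootstrap that simultaneously exploits the PS identity, the level bound $c < \omega/(2\alpha_0)$, and the quantitative control in Lemma \ref{lions} -- this is the technical heart of the proof, and is exactly where the level threshold $\omega/(2\alpha_0)$ becomes decisive.
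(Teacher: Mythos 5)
Your overall architecture matches the paper's: boundedness of the PS sequence from ${\bf H}(i)$--$(ii)$, identification of the weak limit $u$ as a solution with $\varphi(u)\geq 0$, the splitting $L^2=\lim_n\|u_n\|_X^2-\|u\|_X^2$, and then Lemma \ref{lions} applied to the normalized sequence to bound $(f(u_n))$ in $L^q(0,1)$ for some $q>1$, whence $\int_0^1 f(u_n)u_n\,dx\to\int_0^1 f(u)u\,dx$ and strong convergence. However, the circularity you flag in your last paragraph is a genuine gap, not a technicality: to place yourself in the regime $L^2<2\pi\omega/\alpha_0$ where Lemma \ref{lions} bites, you need the identity $\lim_n\int_0^1 F(u_n)\,dx=\int_0^1 F(u)\,dx$ (Fatou alone gives only $L^2\geq 4\pi(c-\varphi(u))$, and your claim that Fatou also yields $\varphi(u)\leq c$ is unjustified -- that upper bound is precisely equivalent to what you are trying to prove). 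But your proposed route to that identity, Vitali plus equi-integrability from Lemma \ref{lions}, already presupposes the bound on $L^2$. There is no ``careful bootstrap'' that closes this loop as stated; you would be assuming the conclusion to prove the hypothesis.

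The paper breaks the circle by a different mechanism, which is the one ingredient missing from your proposal: the truncation argument of de Figueiredo, Miyagaki $\&$ Ruf \cite[Lemma 2.1]{DMR}. From the uniform bounds $\int_0^1 f(u_n)u_n\,dx\leq C$ and $u_n\to u$ a.e.\ (both available from boundedness of the PS sequence and ${\bf H}(ii)$, with no reference to the level $c$ or to any Trudinger--Moser estimate), one obtains $f(u_n)\to f(u)$ in $L^1(0,1)$ outright; then ${\bf H}(i)$, i.e.\ $F(t)\leq M|f(t)|$ for $|t|\geq t_0$, upgrades this to $\int_0^1 F(u_n)\,dx\to\int_0^1 F(u)\,dx$. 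Only after this does one know $\|u_n\|_X^2\to 4\pi(c+\int_0^1 F(u)\,dx)$, hence $L^2=4\pi(c-\varphi(u))\leq 4\pi c<2\pi\omega/\alpha_0$, and Lemma \ref{lions} becomes applicable exactly as you describe. Note also that this same $L^1$-convergence is what legitimizes passing to the limit in $\int_0^1 f(u_n)v\,dx$ for test functions $v$ (your ``$L^1$ bound plus pointwise convergence'' is not by itself enough to identify $u$ as a weak solution, since mass could concentrate), and that even your ``routine'' subcase $u=0$ needs $\int_0^1 F(u_n)\,dx\to 0$ to conclude $\|u_n\|_X^2\to 4\pi c$ before Corollary \ref{mti} can be invoked. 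With the DMR lemma inserted at the start, the rest of your plan goes through and essentially reproduces the paper's proof.
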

\begin{proof}
Let $(u_n)$ be a sequence in $X$ such that $\varphi(u_n)\to c$ and $\varphi'(u_n)\to 0$ in $X^*$. Arguing as in the proof of Lemma~\ref{ps}, it is readily seen that there exists a positive
constant $c_{10}$ such that, for all $n\in\N$,
\[\max\Big\{\|u_n\|_X^2,\int_0^1 f(u_n)u_n dx,\int_0^1 F(u_n)dx\Big\}\leq c_{10}.\]
Moreover, up to a subsequence, $u_n\rightharpoonup u$ in $X$ and $u_n\to u$ in $L^q(0,1)$ for all $q\geq 1$. Reasoning as in \cite[Lemma 2.1]{DMR} we have $f(u_n)\to f(u)$ in $L^1(0,1)$.
Whence, in light of ${\bf H}(i)$ it follows that 
$\int_0^1 F(u_n)dx\to\int_0^1 F(u)dx$. So we have
\begin{equation}\label{norm-conv}
\frac{\|u_n\|_X^2}{4\pi}\to c+\int_0^1 F(u)dx.
\end{equation}
Then, since $\varphi'(u_n)\to 0$, we get
\[\int_0^1 f(u_n)u_n dx\to 2\Big(c+\int_0^1 F(u)dx\Big).\]
So, by means of ${\bf H}(ii)$, we have
\[c=\frac{1}{2}\lim_n\int_0^1\left[f(u_n)u_n-2F(u_n)\right]dx\geq 0.\]
Besides, for all $v\in C^\infty_c(0,1)$ we have
\[
\langle\varphi'(u),v\rangle=\frac{1}{2\pi}
\langle u,v\rangle_X-\int_0^1 f(u)v dx=\lim_n\langle\varphi'(u_n),v\rangle=0.\]
Recalling again the density result of \cite{FSV}, we have $\langle\varphi'(u),v\rangle=0$ for all $v\in X$, namely $u$ is a solution of \eqref{problema-1d}. By ${\bf H}(ii)$ and taking $v=u$ we have
\[
\varphi(u)=\frac{1}{2}\Big(\frac{\|u\|_X^2}{2\pi}-2\int_0^1 F(u)dx\Big)\geq\frac{1}{2}\Big(\frac{\|u\|_X^2}{2\pi}-\int_0^1 f(u)udx\Big)=0.
\]
Summarizing, we have $c\geq 0$ and $\varphi(u)\geq 0$. Now we distinguish three cases.
\begin{itemize}[leftmargin=.2in]
\item[$(a)$] If $c=0$, then by virtue of \eqref{norm-conv} and $\varphi(u)\geq 0$, we get
\[
\frac{\|u\|_X^2}{4\pi}\geq\int_0^1 F(u)dx=\lim_n\frac{\|u_n\|_X^2}{4\pi}.
\]
Recalling that $u_n\rightharpoonup u$ in $X$, we conclude that $u_n\to u$ in $X$.
\item[$(b)$] If $c>0$, $u=0$, then the sequence $(f(u_n))$ is bounded in 
$L^q(0,1),$ for some $q>1$. Indeed, since $c<\omega/(2\alpha_0)$ we can find $q>1$, $\eps>0$ and $\alpha_0<\alpha<2\pi\omega$ such that $2\pi (2c+\eps)q\alpha:=\beta<2\pi\omega$. Since $\|u_n\|_X^2\to 4\pi c$, for $n\in\N$ big enough we have $\|u_n\|_X^2<2\pi(2c+\eps)$. So, applying \eqref{grow} and Corollary \ref{mti} we have
\[
\int_0^1|f(u_n)|^qdx\leq c_2^q\int_0^1 e^{q\alpha u_n^2}dx\leq c_2^q\int_0^1 e^{\beta\big(\frac{u_n}{\|u_n\|_X}\big)^2}dx\leq c_2^q K_\beta.
\]
\noindent
Recalling that $u_n\to 0$ in $L^{q'}(0,1)$ and that
\[
0\leq\int_0^1 f(u_n)u_n dx\leq\|f(u_n)\|_{L^q(0,1)}\|u_n\|_{L^{q'}(0,1)},
\]
from $\varphi'(u_n)\to 0$ we have immediately
\[\lim_n\frac{\|u_n\|_X^2}{2\pi}=\lim_n\int_0^1 f(u_n)u_ndx=0,\]
whence $u_n\to 0$ in $X$. Thus $\varphi(u_n)\to 0<c$, a contradiction.
\item[$(c)$] If $c>0$, $u\neq 0$, then we prove that $\varphi(u)=c$. 
This equality yields the strong convergence by means of \eqref{norm-conv}.
We know that $\varphi(u)\leq c$, so by contradiction assume $\varphi(u)<c$. Then 
\[
\|u_n\|_X^2\to 4\pi\Big(c+\int_0^1 F(u)dx\Big)>\|u\|_X^2.
\]
Set $v_n=\|u_n\|_X^{-1}u_n$ and 
$v=\big(4\pi c+4\pi\int_0^1 F(u)dx\big)^{-\nicefrac{1}{2}}u$. So we have 
$\|v_n\|_X=1$, $0<\|v\|_X<1$ and $v_n\rightharpoonup v$ in $X$. Since $c<\omega/(2\alpha_0)$, we can find $q>1$, $\alpha_0<\alpha<2\pi\omega$ such that $qc<\omega/(2\alpha)$, hence (recall $\varphi(u)\geq 0$)
\[2q\alpha<\frac{\omega}{c-\varphi(u)}.\]
We have
\[\lim_n q\alpha\|u_n\|_X^2=4\pi q\alpha\Big(c+\int_0^1 F(u)dx\Big)<2\pi\omega\frac{c+\int_0^1 F(u)dx}{c-\varphi(u)}.\]
We can choose $p>1$, $0<\gamma<2\pi\omega$ such that
\[p<\frac{c+\int_0^1 F(u)dx}{c-\varphi(u)}=\frac{1}{1-\|v\|_X^2}\]
and for $n\in\N$ big enough
\[q\alpha\|u_n\|_X^2<p\gamma.\]
Since $\gamma<2\pi\omega$, by Lemma \ref{lions} the sequence $(e^{\gamma v_n^2})$ is bounded in $L^p(0,1)$, so
\[
\int_0^1|f(u_n)|^q dx\leq c_2^q\int_0^1 e^{q\alpha u_n^2}dx
\leq c_2^q\int_0^1 e^{p(\gamma v_n^2)}dx,
\]
which proves that $(f(u_n))$ is bounded in $L^q(0,1)$. Passing if necessary to a subsequence, we have $f(u_n)\rightharpoonup f(u)$ in $L^q(0,1)$ while $u_n\to u$ in $L^{q'}(0,1)$. So,
\begin{align*}
\label{convfss}
&\Big|\int_0^1 f(u_n) u_n dx -\int_0^1 f(u) u dx\Big| \\
&\leq 
\|f(u_n)\|_{L^q(0,1)}\|u_n-u\|_{L^{q'}(0,1)} 
+\Big|\int_0^1 (f(u_n)-f(u))u dx\Big|, \notag
\end{align*}
hence
\[\lim_n\int_0^1 f(u_n)u_n dx=\int_0^1 f(u)udx.\]
As above, this yields $u_n\to u$ in $X$. This in turn implies $\varphi(u)=c$, a contradiction. 
\end{itemize}
This concludes the proof.
\end{proof}

\medskip
\noindent
{\em Proof of Theorem~\ref{C-state} concluded.}
The conclusions of Lemmas \ref{mount1} and \ref{asympt} still hold, with small changes in the proofs. Moreover, if we fix $t>0$ such that $\varphi(t\psi)<\varphi(0)$ and denote by $\Gamma$ the set of continuous paths in $X$ joining $0$ and $t\psi$ and set
\[c=\inf_{\gamma\in\Gamma}\max_{\tau\in[0,1]}\varphi(\gamma(\tau)),\]
by ${\bf H'}(v)$ we see that $c<\omega/(2\alpha_0)$. Thus, by Lemma \ref{ps-2}, $\varphi$ satisfies the Palais-Smale condition at level $c$. By the Mountain Pass Theorem, then, \eqref{problema-1d} has a nontrivial solution.
\qed

\begin{example}\label{ex2}\rm
Fix $0<\mu<\nicefrac{\lambda_1}{2\pi}$, $0<\alpha_0<2\pi\omega$. Define $f:\R\to\R$ by setting, for all $t\geq 0$,
\[f(t)=\left\{
	\begin{array}{ll}
		\mu t  & \mbox{if } 0\leq t\leq 1, \\
		\mu te^{\alpha_0(t^2-1)} & \mbox{if } t>1,
	\end{array}
\right.
\]
and $f(t)=-f(-t)$ for all $t<0$. It is easily seen that $f$ is continuous and satisfies ${\bf H'}(i)-(iv)$. If there exists $\psi\in X$ satisfying ${\bf H'}(v)$, then by Theorem \ref{C-state} the corresponding problem \eqref{problema-1d} admits a nontrivial solution.
\end{example}

\bigskip

\end{document}